\newcommand{\TA}{\mathop{\mathrm{TA}}\nolimits}
\newcommand{\DA}{\mathop{\mathrm{DA}}\nolimits}
\newcommand{\SA}{\mathop{\mathrm{SA}}\nolimits}
\newcommand{\rank}{\mathop{\mathrm{rank}}\nolimits}
\newcommand{\rect}{\mathop{\mbox{\textrm{R}}}\nolimits}
\newcommand{\pg}{\mathop{\mathrm{PG}}\nolimits}
\newcommand{\pgl}{\mathop{\mathrm{PGL}}\nolimits}
\newcommand{\gf}{\mathop{\mathrm{GF}}\nolimits}
\newcommand{\gd}{\mathop{\mathrm{GD}}\nolimits}
\newcommand{\pgaml}{\mathop{\mathrm{P}\Gamma\mathrm{L}}\nolimits}
\newcommand{\biplane}{\Xi}
\newtheorem{theorem}{Theorem}[section]
\newtheorem{corollary}[theorem]{Corollary}
\newtheorem{proposition}[theorem]{Proposition}
\newenvironment{ex}{\preex\rm}{\endpreex}
\newenvironment{proof}{\prepf\rm}{\endprepf}
\begin{document}

\title{Sesqui-arrays, a generalisation of triple arrays}
\author{R. A. Bailey\footnote{School of Mathematics and Statistics, University
of St Andrews, North Haugh, St Andrews, Fife KY16 9SS, U.K.},
Peter J. Cameron$^*$ and Tomas Nilson\footnote{Department of Science Education and Mathematics, Mid Sweden University, SE-851 70 Sundsvall, Sweden}}
\date{}

\maketitle

\begin{abstract}%
A triple array is a rectangular array containing letters, each letter occurring
equally often with no repeats in rows or columns, such that the number of 
letters common to two rows, two columns, or a row and a column are (possibly
different) non-zero constants. Deleting the condition on the letters common
to a row and a column gives a double array. We propose the term
\emph{sesqui-array} for such an array when only the condition on pairs of
columns is deleted. Thus all triple arrays are sesqui-arrays.

In this paper we give three constructions for sesqui-arrays. The first gives
$(n+1)\times n^2$ arrays on $n(n+1)$ letters for $n\geq 2$. (Such an array
for $n=2$ was found by Bagchi.) This construction uses  Latin squares.
The second uses the \emph{Sylvester graph}, a subgraph of the
Hoffman--Singleton graph, to build a good block design for $36$ treatments in
$42$ blocks of size~$6$, and then uses this in a $7\times 36$ sesqui-array for
$42$ letters.

We also give a construction for $K\times(K-1)(K-2)/2$ sesqui-arrays on 
$K(K-1)/2$ letters.
This construction uses biplanes. It
starts with a block of a biplane and produces an array which satisfies the
requirements for a sesqui-array except possibly that of having no
repeated letters in a row or column. We show that this condition holds if
and only if the \emph{Hussain chains} for the selected block contain no
$4$-cycles. A sufficient condition for the construction to give a triple
array is that each Hussain chain is a union of $3$-cycles; but this condition
is not necessary, and we give a few further examples.

We also discuss the question of which of these arrays provide good designs for
experiments.
\end{abstract}

\section*{Dedication}
This paper is dedicated to the memory of Anne Penfold Street.
Throughout her career, her research focussed on various combinatorial designs.
However, she also linked these ideas to experimental design, as the book
\cite{SS} shows.  Indeed, some of her combinatorial research on neighbour designs
\cite{grid2,grid3,grid1,ebibd}
was inspired by problems in the design of agricultural experiments.  
We hope that this paper also manages to bridge the two areas.

The second author had the privilege of being taught by Anne at the University
of Queensland in the 1960s.  In those far-off days, there was no such subject
as combinatorics or design theory; Anne's lectures were on measure theory.
The first author remembers with gratitude Anne's hospitality (both mathematical
and social) during various research visits to the University of Queensland.

\section{Introduction}
\label{sec:intro}
\subsection{Definitions and Notation}

Suppose that we have an $r\times c$ array~$\Delta$ in which each of the
$rc$~cells contains one letter from a set of size $v$.  In order to exclude
Latin squares and Youden squares, we assume that $v > \max\{r,c\}$.
Figures~\ref{fig:TA1}--\ref{fig:SA1} show three such arrays.

\begin{figure}
\[
\begin{array}{|c|c|c|c|c|c|}
\hline
A & F & C & D & H & J\\
\hline
B & A & I & J & E & H\\
\hline
C & H & G & B & I & D\\
\hline
D & G & A & I & F & E\\
\hline
E & B & J & F & C & G\\
\hline
\end{array}
\]
  \caption{A triple array with $r=5$, $c=6$ and $v=10$}
  \label{fig:TA1}
  \end{figure}

\begin{figure}
  \[
  \begin{array}{|c|c|c|c|}
    \hline
    A & B & C & D\\
    \hline
    F & A & B & E\\
    \hline
    C & D & E & F\\
    \hline
    \end{array}
  \]
  \caption{A double array with $r=3$, $c=4$ and $v=6$}
  \label{fig:DA1}
\end{figure}

\begin{figure}
  \[
  \begin{array}{|c|c|c|c|c|c|c|}
\hline
A & H & B & G & C & F \\
\hline
B & G & F & C & E & D\\
\hline
C & F & E & D & A & H\\
\hline
D & E & A & H & G & B\\
\hline
  \end{array}
  \]
  \caption{A sesqui-array with $r=4$, $c=6$ and $v=8$}
  \label{fig:SA1}
  \end{figure}

Such an array gives rise to six incidence matrices.
The incidence matrix $N_{LR}$ is the $v \times r$ matrix whose $(i,j)$-entry is
the number of times that letter~$i$ occurs in row~$j$;
while $N_{RL} = N_{LR}^\top$.
The matrices $N_{LC}$ and $N_{CL}$ are defined similarly.
Likewise, $N_{RC}$ is the $r\times c$ matrix whose $(i,j)$-entry is the number
of letters in the unique cell where row~$i$ meets column~$j$: the assumption
at the start of this section is
that every entry in $N_{RC}$ is equal to $1$.  Also $N_{CR} = N_{RC}^\top$.

The array $\Delta$ also defines various component designs.  In the row component
design $\Delta_{R(L)}$ we consider rows as points and letters as blocks. Thus
$\Delta_{R(L)}$ is balanced if $N_{RL}N_{LR}$ is completely symmetric, which means
that it is a linear combination of the identity matrix and the all-$1$ matrix.
The column
component design $\Delta_{C(L)}$ is analogous.  The duals of these designs are
$\Delta_{L(R)}$ and $\Delta_{L(C)}$.
The component design $\Delta_{L(R,C)}$ is really what we have described, by
considering which letter occurs in which cell. Its dual is $\Delta_{R,C(L)}$,
which is concerned with which row-column combinations are allocated to each
letter.  We shall return to these last two components in Section~\ref{sec:opt}.

Here are some conditions that the array $\Delta$ may satisfy, with
names often used in the statistics literature.
\begin{itemize}\itemsep0pt
\item[(A0)]
  No letter occurs more than once in any row or any column.
  (The component designs $\Delta_{R(L)}$ and $\Delta_{C(L)}$ are both
  \emph{binary}; equivalently, all the entries in $N_{LR}$ and $N_{LC}$ are in
  $\{0,1\}$.)
\item[(A1)]
  Each letter occurs a constant number $k$ of times, where $vk=rc$.
(The array is \emph{equireplicate}.)
\item[(A2)]
  The number of letters common to any two rows is a non-zero constant
  $\lambda_{rr}$. (The component design $\Delta_{R(L)}$
  is \emph{balanced}.)
\item[(A3)]
  The number of letters common to any two columns is a non-zero constant
  $\lambda_{cc}$. (The component design $\Delta_{C(L)}$
  is \emph{balanced}.)
\item[(A4)]
  The number of letters common to any row and any column is a constant
  $\lambda_{rc}$.
  (In the context that each cell contains exactly one letter, this means that
  rows and columns have \emph{adjusted orthogonality} with respect to letters.)
\end{itemize}

Condition (A0) implies that the diagonal entries of $N_{RL}N_{LR}$
are all equal to $c$; and those of $N_{CL}N_{LC}$ 
are all equal to $r$.
Conditions (A0) and (A1) imply that the diagonal entries of $N_{LR}N_{RL}$ and
$N_{LC}N_{CL}$ are all equal to $k$.

Condition (A3) states that the off-diagonal entries of $N_{CL}N_{LC}$ are all
equal.  In general, we shall call these entries the \emph{column-intersection
  numbers}.  In the context of the component design $\Delta_{C(L)}$, they are
usually called \emph{concurrences}.

Condition (A4) states that every entry of $N_{RL}N_{LC}$ is equal
to $\lambda_{rc}$.

In the literature of the 21st century, an array which has $v>\max\{r,c\}$
and which satisfies all five conditions (A0)--(A4) is called a
\emph{triple array}, while one satisfying conditions (A0)--(A3) is called
a \emph{double array}. We propose the term \emph{sesqui-array} for an array
satisfying (A0)--(A2) and (A4): the prefix ``sesqui'' means ``one-and-a-half'',
and, of the last three conditions, we require adjusted orthogonality and half
of the two balance conditions required for a triple array.
For these three types of array we use the notation
$\TA(v,k,\lambda_{rr},\lambda_{cc},\lambda_{rc}:r\times c)$,
$\DA(v,k,\lambda_{rr},\lambda_{cc}:r\times c)$ and
$\SA(v,k,\lambda_{rr},\Gamma,\lambda_{rc}:r\times c)$ respectively, where
$\Gamma$ denotes the set of intersection numbers for pairs of distinct columns. 
The array in Figure~\ref{fig:TA1}, given in \cite{pot}, is a $\TA(10,3,3,2,3: 5 \times 6)$;
the array in Figure~\ref{fig:DA1} is a $\DA(6,2,2,1:3\times 4)$;
while that in Figure~\ref{fig:SA1} is a $\SA(8,3,4,\{0,2\},3: 4 \times 6)$.

\subsection{An important inequality}
If $\Delta_{R(L)}$ is balanced then $N_{RL}$ has rank $r$.  This gives the
shortest proof of Fisher's Inequality, that balance implies $r\leq v$.
Adjusted orthogonality also leads to some useful inequalities, as we now show.

\begin{theorem}
  \label{th:weak}
  If there is exactly one letter in each cell of $\Delta$, and $\Delta$ has
  adjusted orthogonality, then
  $\rank(N_{RL}) + \rank(N_{LC}) \leq v+1$.
  \end{theorem}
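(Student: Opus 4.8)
The plan is to recast adjusted orthogonality as a single matrix identity and then invoke a standard rank inequality for products. Since every cell of $\Delta$ contains exactly one letter, the product $N_{RL}N_{LC}$ is the $r\times c$ matrix whose $(i,j)$-entry records the number of letters common to row~$i$ and column~$j$ (counted with multiplicity). By adjusted orthogonality (condition (A4)), each such entry equals the constant $\lambda_{rc}$, so $N_{RL}N_{LC}=\lambda_{rc}J$, where $J=\mathbf{1}_r\mathbf{1}_c^\top$ is the all-$1$ matrix. In particular this product has rank at most~$1$.

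The key tool is Sylvester's rank inequality: for an $r\times v$ matrix $A$ and a $v\times c$ matrix $B$ one has $\rank(A)+\rank(B)-v\leq\rank(AB)$. Applying this with $A=N_{RL}$ and $B=N_{LC}$, so that the inner dimension is exactly $v$, the number of letters, gives $\rank(N_{RL})+\rank(N_{LC})-v\leq\rank(N_{RL}N_{LC})\leq 1$, which rearranges at once to the claimed bound $\rank(N_{RL})+\rank(N_{LC})\leq v+1$.

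If a self-contained argument is preferred, Sylvester's inequality follows from rank--nullity: viewing $N_{LC}$ and $N_{RL}$ as linear maps and restricting $N_{RL}$ to the column space of $N_{LC}$, the image of the restriction is precisely the column space of $N_{RL}N_{LC}$ while its kernel lies inside the kernel of $N_{RL}$; comparing dimensions yields $\rank(N_{LC})\leq\rank(N_{RL}N_{LC})+(v-\rank(N_{RL}))$, which is the inequality used above.

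There is no serious obstacle here: the whole content is the observation that adjusted orthogonality forces the product of the two incidence matrices to be a scalar multiple of the all-$1$ matrix, hence of rank at most one, after which the bound is immediate. The only points to watch are the bookkeeping of matrix shapes, so that the inner dimension in Sylvester's inequality is correctly identified as $v$ (rather than $r$ or $c$), and the remark that the argument is valid whether or not $\lambda_{rc}$ vanishes, since we use only that the rank of the product is at most~$1$.
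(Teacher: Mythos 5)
Your proof is correct and essentially identical to the paper's: both reduce (A4) to the statement that $N_{RL}N_{LC}$ has rank at most $1$ and then apply Sylvester's rank inequality with inner dimension $v$, which the paper proves inline by exactly the row-space/rank--nullity argument you give in your third paragraph. Your added remark that the bound holds even if $\lambda_{rc}=0$ is harmless (and in fact $\lambda_{rc}\geq 1$ here, since the letter in the cell where a row meets a column is common to both).
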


\begin{proof}
  Condition (A4) implies that $N_{RL}N_{LC}$ has rank~$1$.
  The row-space of $N_{RL}$ has dimension $\rank(N_{RL})$, and this is mapped
  by $N_{LC}$ onto a space of dimension at most~$1$.  Hence
  $v - \rank(N_{LC}) \geq \rank(N_{RL}) -1$.
\end{proof}

\begin{corollary}
\label{cor:ses}
  If $\Delta$ is a sesqui-array then $v \geq r + \rank(N_{LC}) -1$.
  \end{corollary}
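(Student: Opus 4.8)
The plan is to read off $\rank(N_{RL})=r$ from the balance condition and then feed this directly into Theorem~\ref{th:weak}. A sesqui-array satisfies (A0), (A1), (A2) and (A4); in particular there is exactly one letter in each cell and the array has adjusted orthogonality, so Theorem~\ref{th:weak} applies and yields $\rank(N_{RL})+\rank(N_{LC})\le v+1$. It therefore suffices to show that $\rank(N_{RL})=r$, for then $r+\rank(N_{LC})\le v+1$, which rearranges to the claimed inequality $v\ge r+\rank(N_{LC})-1$.

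To establish $\rank(N_{RL})=r$ I would invoke the balance of the row component design $\Delta_{R(L)}$, guaranteed by (A2), exactly as in the Fisher-inequality remark at the start of this subsection. Concretely, (A0) makes the diagonal entries of the $r\times r$ matrix $N_{RL}N_{LR}$ equal to $c$, while (A2) makes its off-diagonal entries equal to the constant $\lambda_{rr}$; hence $N_{RL}N_{LR}=(c-\lambda_{rr})I+\lambda_{rr}J$. This completely symmetric matrix has eigenvalues $c+(r-1)\lambda_{rr}$ (once) and $c-\lambda_{rr}$ (with multiplicity $r-1$), so $N_{RL}$ has rank $r$ precisely when both of these are nonzero.

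The one point needing care---and essentially the only possible obstacle---is to verify that both eigenvalues are nonzero, so that $N_{RL}N_{LR}$ is nonsingular. Since $\lambda_{rr}>0$ and $c>0$, the first eigenvalue is strictly positive. For the second I would argue that $\lambda_{rr}<c$: by (A0) each row contains $c$ distinct letters, and if two rows had all $c$ letters in common then, $\lambda_{rr}$ being a constant, every pair of rows would share all their letters and so all rows would use the same set of $c$ letters, forcing $v=c$ and contradicting the standing assumption $v>c$. Hence $c-\lambda_{rr}>0$, the product $N_{RL}N_{LR}$ is nonsingular, and $\rank(N_{RL})=r$. Substituting this into the inequality from Theorem~\ref{th:weak} completes the proof.
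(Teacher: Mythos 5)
Your proposal is correct and follows essentially the same route as the paper: (A2) makes $\Delta_{R(L)}$ balanced, which (as the paper notes at the start of the subsection, via the Fisher-inequality argument) forces $\rank(N_{RL})=r$, and substituting this into Theorem~\ref{th:weak} gives the inequality. Your careful verification that $c-\lambda_{rr}>0$ (using $v>c$, with equireplication guaranteeing every letter actually appears in some row) is a detail the paper leaves implicit, but it is exactly the standard argument intended.
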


\begin{corollary}
  \label{cor:AO}
  If $\Delta$ is a triple array then 
\begin{equation}
  v \geq r+c -1.
  \label{eq:AO}
  \end{equation}
  \end{corollary}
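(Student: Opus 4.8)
The plan is to read this off directly from Theorem~\ref{th:weak} (equivalently Corollary~\ref{cor:ses}), once I supply the two rank facts it needs. A triple array satisfies all of (A0)--(A4); in particular it satisfies (A0)--(A2) and (A4), so it is a sesqui-array and Corollary~\ref{cor:ses} applies, giving $v \geq r + \rank(N_{LC}) - 1$. So the entire task reduces to showing that $\rank(N_{LC}) = c$.

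For that I would invoke exactly the argument used for Fisher's Inequality in the paragraph preceding Theorem~\ref{th:weak}. Since $\Delta$ is a triple array it satisfies (A3), so the column component design $\Delta_{C(L)}$ is balanced; that is, $N_{CL}N_{LC}$ is completely symmetric. By (A0) its diagonal entries equal $r$ and by (A3) its off-diagonal entries equal the nonzero constant $\lambda_{cc}$, so $N_{CL}N_{LC} = (r-\lambda_{cc})I_c + \lambda_{cc}J_c$. This matrix is nonsingular (its eigenvalues are $r + (c-1)\lambda_{cc}$ and $r - \lambda_{cc}$, both nonzero under the standing assumptions), whence $N_{CL}$ has full row rank $c$; and therefore $\rank(N_{LC}) = \rank(N_{CL}^\top) = c$ as well. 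This is the one point requiring a little care, but it is precisely the Fisher-type computation already cited in the text, applied to columns rather than rows.

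Substituting $\rank(N_{LC}) = c$ into Corollary~\ref{cor:ses} then yields $v \geq r + c - 1$ immediately. (Equivalently, one could bypass Corollary~\ref{cor:ses} and feed the two rank values straight into Theorem~\ref{th:weak}: balance of $\Delta_{R(L)}$ gives $\rank(N_{RL}) = r$ and balance of $\Delta_{C(L)}$ gives $\rank(N_{LC}) = c$, so $r + c = \rank(N_{RL}) + \rank(N_{LC}) \leq v + 1$.)

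I expect no genuine obstacle here: the content is entirely contained in the earlier results, and the only step that is not a one-line citation is verifying nonsingularity of the completely symmetric matrix $N_{CL}N_{LC}$, which is routine. The inequality~\eqref{eq:AO} is thus the symmetric counterpart of Fisher's Inequality, now using balance in \emph{both} directions together with adjusted orthogonality.
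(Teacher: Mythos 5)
Your proposal is correct and follows exactly the route the paper intends: the corollary is stated without separate proof precisely because it is Corollary~\ref{cor:ses} combined with the Fisher-type rank fact (``balance implies full rank'') quoted at the start of the subsection, here applied to the balanced column component $\Delta_{C(L)}$ to get $\rank(N_{LC})=c$. Your explicit verification that $N_{CL}N_{LC}=(r-\lambda_{cc})I_c+\lambda_{cc}J_c$ is nonsingular (noting $\lambda_{cc}\neq 0$ and $\lambda_{cc}<r$, the latter forced by $v>r$) is the routine computation the paper leaves implicit, so there is nothing to add.
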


Bagchi proved Theorem~\ref{th:weak} and Corollary~\ref{cor:AO} 
in \cite{Bagchi1998}.
Corollary~\ref{cor:AO} was also proved in \cite{extremal,triple}. 

The triple array in Figure~\ref{fig:TA1} satisfies inequality~(\ref{eq:AO}).
The double array in Figure~\ref{fig:DA1} does not satisfy condition (A4) 
but it does satisfy inequality~(\ref{eq:AO}), as do all known double arrays.
The sesqui-array in Figure~\ref{fig:SA1} satisfies the inequality in
Corollary~\ref{cor:ses} but not (\ref{eq:AO}) because $v-r+1 =5
> 4 = \rank(N_{LC})$
but $c=6$.

There are various statistical optimality criteria for designs, which we
discuss in Section~\ref{sec:opt}. If $r<v$ and the component design
$\Delta_{R(L)}$ is good by these criteria, then it is often true that $N_{RL}$
has rank~$r$.  Likewise, if $c<v$ and $\Delta_{C(L)}$ has good statistical
properties then usually $N_{LC}$ has rank~$c$.  Thus arrays which satisfy
conditions (A0), (A1) and (A4) and  which are useful in experimental design 
often satisfy inequality~(\ref{eq:AO}) even if they do not satisfy both of (A2)
and (A3). Most of the sesqui-arrays in this paper satisfy
inequality~(\ref{eq:AO}).

However, Corollary~\ref{cor:ses} does exclude some block designs from being
the column component of a sesqui-array.

\begin{ex}
\label{eg:1}
  Consider a block design for six points in eight blocks of size three.
  The average concurrence is $8/5$.  Up to isomorphism, the only block
  design with all concurrences in $\{1,2\}$ is the one made by
  developing the blocks $\{1,2,5\}$ and $\{1,3,5\}$ modulo~$6$.
  The incidence matrix of this block design has rank $6$, and so
  Corollary~\ref{cor:ses} shows that it is impossible to have a $4 \times 6$
  sesqui-array with this as its column component.
  \end{ex}

\subsection{The construction problem}

There are already some sesqui-arrays in the literature, but without this name.
Bagchi~\cite{Bagchi1996} constructed an infinite family of $(n+1)\times 2n$
sesqui-arrays with $n^2+n$ letters, partly by using mutually orthogonal Latin
squares. 
The set of column-intersection numbers is $\{0,1,2\}$ and
component $\Delta_{C(L)}$ is partially
balanced with respect to the rectangular association scheme  $\rect(2,n)$.
(We refer the reader to \cite[Chapter 11]{SS} and \cite{RAB:AS} for information
about association schemes and partially balanced designs.)
Figure~\ref{fig:SA5} gives an example with $n=4$.
Bagchi and van Berkum \cite{BB}
gave an infinite family of arrays which, when transposed,
are sesqui-arrays with $r=s$, $c=st$ and $v=s^2$,
where $s$ is a prime power and $t$ is the size of a difference set in $\gf(s)$.
The component $\Delta_{L(C)}$ is a square lattice design (see Section~\ref{sec:latt}). 
Some of these were also given by Eccleston and Street in \cite{EccSt}.

\begin{figure}
\[
\begin{array}{|c|c|c|c|c|c|c|c|}
\hline
A & F & G & H & E & P & R & K\\
\hline
I & B & K & L & S & J & H & M\\
\hline
M & N & C & P & L & Q & O & F\\
\hline
Q & R & S & D & N & G & I & T\\
\hline
E & J & O & T & A & B & C & D\\
\hline
\end{array}
\]
\caption{A sesqui-array with $r=5$, $c=8$ and $v=20$}
\label{fig:SA5}
\end{figure}

These papers all used one or more  direct constructions, 
in the sense of finding a rule specifying the letter
allocated to the cell in row~$i$ and column~$j$ and then proving that the rule produces arrays
with the properties desired.

Nilson and \"Ohman \cite{NO} constructed double arrays from projective planes;
Nilson and Cameron \cite{CN} constructed them from difference sets in finite
groups.

Some triple arrays have also been given with a direct construction.  Preece used cyclic 
constructions in \cite{DAPbcs,DAPbka,DAPOz}; 
Seberry \cite{dodgy}, Street \cite{DS}, Bagchi \cite{Bagchi1998} and 
Preece, Wallis and Yucas \cite{paley} used finite fields;
Bailey and Heidtmann \cite{extremal} used properties of the groups $A_5$ and $S_6$;
Yucas \cite{Yuc} used projective geometry;
Nilson and Cameron \cite{CN} used difference sets with multiplier $-1$ in finite
Abelian groups.

A different approach, for both sesqui- and triple arrays, is to specify the component designs
$\Delta_{L(R)}$ and $\Delta_{L(C)}$ in such a way that the desired conditions are satisfied.
Thus $\Delta_{L(R)}$ is specified by the set $R(i)$ of letters in row~$i$, for $i=1$, \ldots, $r$,
and $\Delta_{L(C)}$ is specified by the set $C(j)$ of letters in column~$j$, for $j=1$, \ldots, 
$c$.  Condition (A4) states that $\left|R(i) \cap C(j)\right| = \lambda_{rc}$ for all $i$ and $j$.
Given these sets, can we put one letter of $R(i) \cap C(j)$ into cell $(i,j)$, for all $i$ and $j$,
in such a way that the set of letters in row $i$ is $R(i)$ and the set of letters in
column $j$ is $C(j)$?

Agrawal took this approach in \cite{Agrawal1} for the extremal case that
$v=r+c-1$.  He observed:

\begin{proposition}
\label{agrawal_obs} The existence of an $r\times c$ triple array with $v$
letters, where $v=r+c-1$, implies the existence of a symmetric
balanced incomplete-block design for $v+1$ points in blocks of size~$r$.
\label{p:agrawal}
\end{proposition}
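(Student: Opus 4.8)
The plan is to realise the symmetric design directly on the letters. I take as points the $v$ letters together with one new point~$\infty$, so there are $v+1=r+c$ points. Each column~$j$ contributes the block $C(j)$ of the $r$ letters it contains (distinct by~(A0)), and each row~$i$ contributes the block $B_i=(L\setminus R(i))\cup\{\infty\}$, where $L$ is the set of all letters; since a row contains $c$ distinct letters, $|B_i|=(v-c)+1=r$, using $v=r+c-1$. There are $c+r=v+1$ blocks, each of size~$r$, and a one-line count with (A0) and (A1) shows each point lies in exactly $r$ of them: a letter~$x$ lies in the $k$ columns through it and in the $r-k$ rows avoiding it, while $\infty$ lies in all $r$ row-blocks. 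Thus only the constancy of the pairwise concurrence remains to be checked.

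The concurrence of $\infty$ with a letter~$x$ is immediate: $\infty$ lies in no column-block and in exactly the $r-k$ row-blocks that avoid~$x$, giving the value $r-k$. For two distinct letters $x,y$, write $n^{\mathrm{row}}_{xy}$ and $n^{\mathrm{col}}_{xy}$ for the numbers of rows, respectively columns, containing both. Then the common column-blocks number $n^{\mathrm{col}}_{xy}$, while the common row-blocks (those avoiding both letters) number $r-2k+n^{\mathrm{row}}_{xy}$ by inclusion--exclusion, so the concurrence is $n^{\mathrm{row}}_{xy}+n^{\mathrm{col}}_{xy}+r-2k$. To make this equal the value $r-k$ found above, it suffices to prove the single identity
\[
 N_{LR}N_{RL}+N_{LC}N_{CL}=k(I+J),
\]
whose off-diagonal $(x,y)$-entry is precisely $n^{\mathrm{row}}_{xy}+n^{\mathrm{col}}_{xy}$ (the diagonal being automatically $2k$).

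This identity is the crux, and is where $v=r+c-1$ is used. Comparing row sums of the products $N_{RL}N_{LR}$, $N_{CL}N_{LC}$ and $N_{RL}N_{LC}$ with the relations $N_{LR}\mathbf 1_r=k\mathbf 1_v$, $N_{LC}\mathbf 1_c=k\mathbf 1_v$, $N_{RL}\mathbf 1_v=c\mathbf 1_r$ and $N_{CL}\mathbf 1_v=r\mathbf 1_c$ gives $(r-1)\lambda_{rr}=c(k-1)$, $(c-1)\lambda_{cc}=r(k-1)$ and $\lambda_{rc}=k$; feeding in $vk=rc$ together with $v=r+c-1$ then forces $\lambda_{rr}=c-k$ and $\lambda_{cc}=r-k$. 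Now put $X=[\,N_{LR}\mid N_{LC}\,]$, a $v\times(r+c)$ matrix. Its Gram matrix $X^{\top}X$ is the block matrix with diagonal blocks $N_{RL}N_{LR}=kI+(c-k)J$ and $N_{CL}N_{LC}=kI+(r-k)J$ and off-diagonal blocks $\lambda_{rc}J=kJ$. Splitting $\mathbb{R}^{r}\oplus\mathbb{R}^{c}$ into the two all-ones directions and their complements, one sees that $X^{\top}X$ acts as the scalar~$k$ on the complements, contributing eigenvalue $k$ with multiplicity $(r-1)+(c-1)=v-1$, and acts on the all-ones plane as $\left(\begin{array}{cc} kc & kc \\ kr & kr \end{array}\right)$, with eigenvalues $k(r+c)=k(v+1)$ and $0$.

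Since $X^{\top}X$ and $XX^{\top}=N_{LR}N_{RL}+N_{LC}N_{CL}$ share their non-zero spectrum, the $v\times v$ matrix $XX^{\top}$ has eigenvalues $k(v+1)$ once and $k$ with multiplicity $v-1$; and since $XX^{\top}\mathbf 1_v=k(v+1)\mathbf 1_v$, the top eigenvalue belongs to $\mathbf 1_v$ and $k$ to its orthogonal complement, whence $XX^{\top}=kI+kJ$, proving the identity. The design then has constant concurrence $r-k$ and is a symmetric balanced incomplete-block design on $v+1$ points with blocks of size~$r$ and $\lambda=r-k$. The main obstacle is exactly this spectral step: the elementary averaging identity $\sum_{y\neq x}(n^{\mathrm{row}}_{xy}+n^{\mathrm{col}}_{xy})=k(v-1)$ only shows that $k$ is the correct mean, whereas constancy of the concurrence is a global fact. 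It is the extremality $v=r+c-1$ that collapses the two ``small'' eigenvalues of $X^{\top}X$ onto the single value~$k$ (via $\lambda_{rr}=c-k$ and $\lambda_{cc}=r-k$), while $\lambda_{rc}=k$ supplies the vanishing eigenvalue, together making $XX^{\top}$ completely symmetric.
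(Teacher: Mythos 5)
Your proposal is correct, and every step checks out: the row-sum counts give $\lambda_{rc}=k$ in any triple array, and feeding $vk=rc$ into $v=r+c-1$ does force $\lambda_{rr}=c-k$ and $\lambda_{cc}=r-k$; the Gram matrix $X^{\top}X$ of $X=[\,N_{LR}\mid N_{LC}\,]$ then has spectrum $k(v+1)$, $k$ (multiplicity $v-1$) and $0$, so $XX^{\top}=N_{LR}N_{RL}+N_{LC}N_{CL}=k(I+J)$, the concurrences collapse to the constant $r-k$, and the resulting structure is a symmetric BIBD on $v+1$ points with block size $r$ and $\lambda=r-k$ (consistently, $(r-k)v=r(r-1)$, and $r-k>0$ since $k=rc/v<r$). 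Be aware, though, that the paper itself contains no proof of this proposition: it records Agrawal's observation and refers to \cite{triple} for the proof. Your construction --- adjoin $\infty$, take the columns as blocks and the complemented rows together with $\infty$ as the remaining blocks --- is exactly the classical one used there, so what is genuinely your own is the verification of balance. The route in the literature establishes $n^{\mathrm{row}}_{xy}+n^{\mathrm{col}}_{xy}=k$ by elementary double counting: the first moment $\sum_{y\ne x}(n^{\mathrm{row}}_{xy}+n^{\mathrm{col}}_{xy})=k(v-1)$ together with a second-moment count (using $\lambda_{rr}$, $\lambda_{cc}$ and $\lambda_{rc}$ to evaluate $\sum_{y\ne x}(n^{\mathrm{row}}_{xy}+n^{\mathrm{col}}_{xy})^2=k^2(v-1)$) shows the variance vanishes. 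Your spectral argument obtains the same identity by matching the nonzero spectra of $X^{\top}X$ and $XX^{\top}$; your diagnosis is accurate that the averaging identity alone only pins down the mean, and that the extremality $v=r+c-1$ is precisely what makes the two ``small'' eigenvalues coincide at $k$ while (A4) supplies the kernel vector. This buys a short, conceptual proof that is very much in the spirit of the paper's own proof of Theorem~\ref{th:weak}, where adjusted orthogonality is likewise exploited as the rank-one condition on $N_{RL}N_{LC}$; the counting proof, by contrast, is more elementary and makes visible exactly which triple-array axioms enter at each moment computation.
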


(A proof was subsequently given in \cite{triple}.)  Agrawal used this to 
give canonical sets $R(i)$ and $C(j)$ from the same set of $v$ letters.
For all the cases that he tried with $k>2$, he was able to choose a
representative element of $R(i) \cap C(j)$ to go in cell $(i,j)$ in such a way
that the representatives in each row were all distinct, as were the
representatives in each column, so that the outcome was a triple array.  He
conjectured that this is always possible if $k>2$.  So far, this conjecture
has been neither proved nor refuted. 

There are two further issues with Agrawal's approach.  The first is that,
unless $v=r+c-1$,
there may not be a canonical way of
labelling the blocks of $\Delta_{R(L)}$ and $\Delta_{C(L)}$ by the same set of
letters.  Thus it may be possible to permute the names of the letters in one
component while still satisfying (A4).  Is it possible that an acceptable set
of distinct representatives can be chosen for one labelling but not the other?

The second issue is that the general problem of finding an array of distinct
representatives, given the sets $R(i)$ and $C(j)$, has been shown to be
NP-complete by Fon-Der-Flaass  in \cite{vanderF}.

A third approach is exhaustive computer search.  Some of the arrays given in 
\cite{tripweb,triple,SW} were found like this.

In the next three sections of this paper we give some direct constructions of
sesqui-arrays, some of which turn out to be triple arrays.  On the way, we
find an incomplete-block design which seems to be a good practical substitute
for the non-existent affine plane of order~$6$. The following section assesses
whether these arrays have statistical properties desirable for experimental
designs, while the final section poses some suggestions for further work.

\section{Sesqui-arrays 
  from Latin squares}
\label{sec:LS}

Here we give a method of constructing a sesqui-array for $n(n+1)$ letters in
a rectangle with $n+1$ rows and $n^2$ columns. It works for every integer~$n$
with $n \geq 2$.
The array has $v=n^2+n = c+r-1$ and so it satisfies inequality~(\ref{eq:AO}).
As usual, we interpret ``letter'' to mean any symbol.

The method has three ingredients.  One is a Latin square $\Phi_1$ of order $n$
on a set $\Lambda_1$ of letters. The second is an $n \times n$ array $\Phi_2$
with $n^2$ distinct letters allocated to its cells; these letters make a
set $\Lambda_2$ disjoint from $\Lambda_1$.  The third is a Latin square $\Phi_3$
of order $n+1$ with letters $1$, $2$, \ldots, $n$ and $\infty$.

Here is the algorithm for constructing an $(n+1) \times n^2$ array $\Delta$
whose set of letters is $\Lambda_1 \cup \Lambda_2$.
\begin{enumerate}
\item
  Start with the array $\Phi_3$.
\item
  Remove the column in which $\infty$ occurs in the last row.
\item
  For $i=1$, \ldots, $n$, replace the occurrence of $\infty$ in row~$i$
  by the $i$-th row of $\Phi_1$.
  \item
  For $i=1$, \ldots, $n$, replace every  occurrence of the symbol~$i$
  by the $i$-th row of $\Phi_2$.
\end{enumerate}

Now, the final row of $\Delta$ contains every letter of $\Lambda_2$, while
row~$i$ replaces those in row~$j$ of $\Phi_2$ by $\Lambda_1$, where $j$~is
the symbol in $\Phi_3$ which is in row~$i$ of the column which has been
removed.  Hence every pair of rows have $n(n-1)$ letters in common.

Moreover, each column of $\Delta$ contains an entire column of $\Phi_2$
and one letter of $\Lambda_1$.  Thus it has $n$~letters in common with
every row.

Finally, consider any column of $\Delta$.  It has $n$~letters of~$\Lambda_2$
in common with every other column derived from the same column of $\Phi_2$.
It has one letter of $\Lambda_1$ in common with each of $n-1$ other columns,
and no letters in common with any other column.

Hence $\Delta$ is a
$\SA\left(n(n+1),n,n(n-1), \{0,1,n\},n: (n+1) \times n^2\right)$.

In fact, the columns of $\Delta$ can be labelled by ordered pairs whose
first elements are columns of $\Phi_2$ and whose second elements are letters
in $\Lambda_1$.  This labelling gives another $n \times n$ array, which gives
the rectangular association scheme $\rect(n,n)$ on the set of columns.

We have therefore proved the following.

\begin{theorem}
  If $n$ is an integer with $n\geq 2$ then there is an $(n+1) \times n^2$
  sesqui-array with $n(n+1)$ letters whose column component design is
  partially balanced with respect to the rectangular association scheme
  $\rect(n,n)$.  Each column has intersection number $0$ with $(n-1)^2$
  other columns, $1$ with $n-1$ other columns, and $n$ with $n-1$ other
  columns.
\end{theorem}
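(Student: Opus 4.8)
The plan is to use the explicit array $\Delta$ constructed above as the witness, so that the proof reduces to verification. That $\Delta$ is an $\SA(n(n+1),n,n(n-1),\{0,1,n\},n:(n+1)\times n^2)$ has already been argued in the discussion preceding the statement: the row--row, row--column and column--column intersection counts given there supply conditions (A0)--(A2) and (A4), and I would simply record this conclusion. The remaining content of the theorem is the identification of the column component design as partially balanced with respect to $\rect(n,n)$, together with the exact multiplicities of the three concurrences, and this is where I would concentrate.

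First I would pin down the contents of a single column. Since $\Phi_3$ is a Latin square and step~2 deletes the column carrying $\infty$ in its last row, the symbol $\infty$ survives exactly once in each of rows $1,\ldots,n$ and disappears from row $n+1$; hence each retained column is a full column of $\Phi_3$, containing $\infty$ in a unique row $\rho(g)$ and each of $1,\ldots,n$ once elsewhere. Tracking the blow-up of steps~3 and~4, I would show that the $k$-th resulting sub-column of the $g$-th column-group consists of the $k$-th column of $\Phi_2$, namely $n$ distinct letters of $\Lambda_2$, together with the single letter $\Phi_1[\rho(g),k]$ of $\Lambda_1$. Labelling this column by the pair consisting of the column index $k$ of $\Phi_2$ and the letter $\Phi_1[\rho(g),k]$ then gives the promised labelling of columns by ordered pairs.

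Next I would verify that this labelling is a bijection onto the point set of $\rect(n,n)$ and realizes its associate classes. The map $g\mapsto\rho(g)$ is a bijection of the $n$ column-groups onto $\{1,\ldots,n\}$, so for fixed $k$ the letters $\Phi_1[\rho(g),k]$ run over the $k$-th column of $\Phi_1$, hence over all of $\Lambda_1$; thus the pair-assignment is injective, and bijective since both sets have size $n^2$. Under it, two columns are associates of one kind exactly when they share a column of $\Phi_2$, and of the other kind exactly when they share a letter of $\Lambda_1$. The concurrences then read off from the column contents: columns sharing a $\Phi_2$-column but not their $\Lambda_1$-letter meet in the $n$ letters of that $\Phi_2$-column, and there are $n-1$ of them; columns sharing a $\Lambda_1$-letter but not their $\Phi_2$-column meet only in that one letter, since distinct columns of $\Phi_2$ are disjoint, and there are $n-1$ of them; the remaining $(n-1)^2$ columns share neither and meet in $0$ letters. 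These numbers sum to $n^2-1$ and match the three classes of $\rect(n,n)$. The one point requiring real care is exactly this bookkeeping --- identifying the $\Lambda_1$ letter of each column as $\Phi_1[\rho(g),k]$ and confirming that $\rho$ is a bijection; once that is secured, the concurrences and their multiplicities follow by inspection.
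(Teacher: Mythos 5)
Your proposal is correct and takes essentially the same route as the paper: the theorem is proved by direct verification of the construction, with the columns labelled by ordered pairs (column of $\Phi_2$, letter of $\Lambda_1$) so that the shared-$\Phi_2$-column and shared-$\Lambda_1$-letter relations realize the two nontrivial classes of $\rect(n,n)$, yielding concurrences $n$, $1$, $0$ with multiplicities $n-1$, $n-1$, $(n-1)^2$. The paper merely asserts this labelling; your explicit identification of the $\Lambda_1$-letter of each column as $\Phi_1[\rho(g),k]$ and the check that $g\mapsto\rho(g)$ is a bijection supply precisely the bookkeeping the paper leaves implicit.
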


\begin{ex}
  When $n=2$ we may take
  \[
  \Phi_1 = \begin{array}{|c|c|}
    \hline
    A & B\\
    \hline
    B & A\\
    \hline
  \end{array}\ ,
  \quad
  \Phi_2 = \begin{array}{|c|c|}
    \hline
    C & D\\
    \hline
    E & F\\
    \hline
  \end{array}
  \quad \mbox{and} \quad
  \Phi_3 = \begin{array}{|c|c|c|}
    \hline
    1 & 2 & \infty\\
    \hline
    2 & \infty & 1\\
    \hline
    \infty & 1 & 2\\
    \hline
    \end{array}\ .
  \]
  Thus $\Lambda_1 = \{A,B\}$ and $\Lambda_2 = \{C,D,E,F\}$.
  Removing the first column of $\Phi_3$, replacing each $\infty$ by the
  appropriate row of $\Phi_1$, replacing each $1$ by $(C,D)$ and each $2$
  by $(E,F)$ gives the sesqui-array in Figure~\ref{fig:SA2}.
  Up to relabelling of the letters, this is identical to an example
  given by Bagchi in \cite{Bagchi1996}.  

  \begin{figure}
    \[
    \begin{array}{|c|c||c|c|}
      \hline
      E & F & A & B\\
      \hline
      B & A & C & D\\
      \hline
      C & D & E & F\\
      \hline
      \end{array}
    \]
    \caption{A sesqui-array with $r=3$, $c=4$ and $v=6$:
      double vertical lines indicate the method of construction}
    \label{fig:SA2}
    \end{figure}
  \end{ex}

\begin{ex}
  For $n=4$, put
  \[
  \Phi_1 =
  \begin{array}{|c|c|c|c|}
    \hline
    A & B & C & D\\
    \hline
    D & A & B & C\\
    \hline
    C & D & A & B\\
    \hline
    B & C & D & A\\
    \hline
    \end{array}\ ,
  \quad
  \Phi_2 = \begin{array}{|c|c|c|c|}
    \hline
    E & F & G & H\\
    \hline
    I & J & K & L\\
    \hline
    M & N & O & P\\
    \hline
    Q & R & S & T\\
    \hline
  \end{array}
\quad \mbox{and} \]
\[
\Phi_3 = \begin{array}{|c|c|c|c|c|}
  \hline
  \infty & 1 & 2 & 3 & 4\\
  \hline
  4 & \infty & 1 & 2 & 3\\
  \hline
  3 & 4 & \infty & 1 & 2\\
  \hline
  2 & 3 & 4 & \infty & 1\\
  \hline
  1 & 2 & 3 & 4 & \infty\\
  \hline
  \end{array}\ .
\]
Removing the last column of $\Phi_3$ gives the sesqui-array in
Figure~\ref{fig:SA3}.

\begin{figure}
  \[
  \begin{array}{|c|c|c|c||c|c|c|c||c|c|c|c||c|c|c|c|}
    \hline
    A & B & C & D & E & F & G & H & I & J & K & L & M & N & O & P\\
    \hline
    Q & R & S & T & D & A & B & C & E & F & G & H & I & J & K  & L\\
    \hline
    M & N & O & P & Q & R & S & T & C & D & A & B & E & F & G & H\\
    \hline
    I & J & K & L & M & N & O & P & Q & R & S & T & B & C & D & A\\
    \hline
    E & F & G & H & I & J & K & L & M & N & O & P & Q & R & S & T\\
    \hline
    \end{array}
  \]
  \caption{A sesqui-array with $r=5$, $c=16$ and $v=20$:
    double vertical lines indicate the method of construction}
  \label{fig:SA3}
  \end{figure}
\end{ex}

This construction has the advantage of being very flexible, because it
can be used for all 
integers $n$ greater than $1$ and because there are
no constraints on the isotopism classes of the Latin squares $\Phi_1$ and
$\Phi_3$.
For most values of $n$, there are no triple arrays with these parameters.
However, the column component $\Delta_{C(L)}$ has very unequal
concurrences, particularly as $n$ increases, and this makes the design
inefficient in the sense discussed in Section~\ref{sec:opt}.

When $n=2$, there is no triple array for this parameter set.  That is why the arrays in
Figures~\ref{fig:DA1} and~\ref{fig:SA2} are not isomorphic.  The first satisfies condition~(A3)
and the second satisfies (A4) but neither satisfies both.

For $n\in \{3,4,5\}$, Sterling and Wormald gave triple arrays for these
parameter sets in \cite{SW}.  Figure~\ref{fig:TA2} shows that for $n=3$.
Agrawal also gave triple arrays for these in \cite{Agrawal1}.
McSorley at al.\ gave one for
$n=7$ in \cite{triple}, and also constructed some for $n \in \{8,11,13\}$, which
can be found at \cite{tripweb}.

\begin{figure}
\[
\begin{array}{|c|c|c|c|c|c|c|c|c|}
\hline
D & H & F & L & E & K & I & G & J\\
\hline
A & K & I & B & J & G & C & L & H\\
\hline
J & A & L & D & B & F & K & E & C\\
\hline
G & E & A & H & I & B & D & C & F\\
\hline
\end{array}
\]
\caption{A triple array with $r=4$, $c=9$ and $v=12$}
\label{fig:TA2}
\end{figure}

For $n=6$ the column component cannot be balanced, as there is no affine
plane of order~$6$.  The next section gives an efficient new design
for $36$ points in $42$ blocks of size $6$, and then a sesqui-array
which has this as its column component.

\section{A design for the case $n=6$}
\label{sec:HS}

\subsection{Square lattice designs}
\label{sec:latt}
If $n$ is a power of a prime then there is an affine plane of order~$n$.
It has $n^2$ points, and its $n(n+1)$ lines form the blocks of a balanced
incomplete-block design. If $2 \leq r \leq n+1$, then the blocks of any
$r$ parallel classes give a block design known as a
\emph{square lattice design} \cite{FY} or \emph{net} \cite{Brnet}.
Even if there is no affine plane of order~$n$, if there
are $r-2$ mutually orthogonal Latin squares of order~$n$ then there is a square
lattice design for $n^2$ points in $rn$ blocks of size~$n$. Such designs are
known to be optimal in the sense discussed in Section~\ref{sec:opt}.  All
concurrences are in $\{0,1\}$.

When $n=6$ then there is no pair of orthogonal Latin squares and so there
are square lattice designs for $r=2$ and $r=3$ but for no larger values of~$r$.
This lack does not prevent the need for efficient block designs for $36$ points
in $6r$ blocks of size $6$ for larger values of $r$. A heuristic for a good
block design (in the sense explained in Section~\ref{sec:opt}) is that all
concurrences are in $\{0,1,2\}$, since they cannot all be
equal and their average value is $r/(n+1)$, which is at most~$1$.

Patterson and Williams gave a good block design for $n=6$ and $r=4$ in
\cite{PW}; it has all concurrences in $\{0,1,2\}$.
We are not aware of similar block designs for $r=5$ or $r=6$.
In the following subsections we construct such block designs for
$r\in\{4, 5,6,7\}$ and then construct a sesqui-array which has the one with
$r=7$
as its column component.

\subsection{Construction of the block designs}
\label{sec:BD36}
Our construction uses a property of the number $6$:
the symmetric group $S_6$ admits an outer
automorphism, and $6$ is the only cardinal number for which this is true.
See \cite[Chapter 6]{cvl} for the use of this outer automorphism in various
constructions, including the \emph{Hoffman--Singleton graph} \cite{hoffsing}:
this is a  graph on $50$ vertices with valency $7$, diameter $2$ and girth $5$.
We require no knowledge of the outer automorphism of $S_6$, but write the 
construction just in terms of the Hoffman--Singleton graph $HS$, and its subgraph
the Sylvester graph.
Details of these graphs can be found at~\cite{drgpage}.

Let $a_0$ and $b_0$ be adjacent vertices of the graph $HS$; let $A$ be the set of
neighbours of $a_0$ excluding $b_0$, and $B$ the set of neighbours of $b_0$
excluding $a_0$. Since there are no triangles, $A\cap B=\emptyset$.
This accounts for $2+6+6=14$ of the $50$ vertices.
If $x$ is one of the remaining $36$ vertices, then $x$ lies at distance $2$
from both $a_0$ and $b_0$, and hence $x$ is joined
to a unique vertex $a$ in $A$ and a unique vertex in $b$ in $B$.
(Uniqueness holds because there are no quadrilaterals in the graph $HS$.)
So we can label $x$ with the ordered pair
$(a,b)$, and identify the set of vertices non-adjacent to $a_0$ and $b_0$
with the Cartesian product $A\times B$.

We are particularly concerned with the induced subgraph of the
graph $HS$ on the set $A\times B$. This graph is known as
the \emph{Sylvester graph} \cite[Theorem~13.1.2]{bcn}. We denote it by $\Sigma$.
Each vertex $(a,b)$ is joined in $HS$ to $a$, $b$, and five vertices
in $A\times B$. Of the six vertices in $A\times B$ in the closed neighbourhood
of $(a,b)$, no two have the same first or second coordinate (since this would
create a short cycle in $HS$).

The elements of $A \times B$ form the points of our new design $\Theta$.
The $42$ blocks are labelled by the elements of $B \cup (A \times B)$.
The block labelled $b$ contains the six points $(a,b)$ for $a$ in $A$;
the block labelled $(a,b)$ contains the point $(a,b)$ and all its
neighbours in the graph $\Sigma$.  Thus every block contains six points
and the design is binary.

Now we check the concurrences of pairs of distinct points $(a_1,b_1)$ and
$(a_2,b_2)$.  If $b_1=b_2$ then the unique block in which they concur is
block $b_1$.  If $a_1=a_2$ then no block contains both.
If points $(a_1,b_1)$ and $(a_2,b_2)$ are adjacent in $\Sigma$ then they
concur in blocks $(a_1,b_1)$ and $(a_2,b_2)$ but no others.
Otherwise, there is a unique point $(a_3,b_3)$ adjacent to both in $\Sigma$,
and so they concur in block $(a_3,b_3)$ but no other.
Hence the set of concurrences is $\{0,1,2\}$.

Note that this design $\Theta$
is partially balanced with respect to the four-class association
scheme whose relations are ``same first component'', ``same second component'',
``adjacent in $\Sigma$'', and ``none of the foregoing''.

The design is $\Theta$ also resolvable.  
The blocks labelled by the elements of $B$ form one replicate.
There are two ways of forming the remaining replicates: either group together all blocks
labelled $(a,b)$ for a fixed $a$, or do the same for a fixed $b$.  Removing one or more
replicates gives a resolvable design with $r<7$.

\subsection{Construction of the sesqui-array}

Now we construct the $7 \times 36$ array $\Delta$, whose column design is
$\Theta$. Our strategy is as follows: first we build an array $\Delta_0$ with
the correct column design; however, $\Delta_0$ fails spectacularly to be a
sesqui-array, having many repeats of letters in rows. Then we fix the problem
by permuting letters in columns to obtain $\Delta$.

The rows of $\Delta_0$ are labelled
with elements of $\{*\}\cup A$, where $*$ is a new symbol,
and the columns by $A\times B$, identified with the points of $\Theta$.
The letters are labelled by $B\cup(A\times B)$, identified with the blocks
of $\Theta$.
The rule for placing letters
in the array is as follows:
\begin{itemize}\itemsep0pt
\item The letter labelled $(a,b)$ goes in row $a$, column $(a',b')$, for
every $(a',b')$ adjacent to $(a,b)$ in the graph $\Sigma$ ($5$ occurrences),
and also in row $*$, column $(a,b)$ ($1$ occurrence).
\item The letter labelled $b$ goes in row $a$, column $(a,b)$ for all $a\in A$
($6$ occurrences).
\end{itemize}
We see at once that the column component $\Delta_{C(L)}$ is the design
$\Theta$.  Hence columns are binary, the design is equireplicate, and the
set of column concurrences is $\{0,1,2\}$. However, for each $a$ in $A$,
row $a$ contains letters
from $B$ each once, and letters of the form $(a,b)$ each five times, for each
$b\in B$, and no letters $(a',b)$ for $a'\ne a$. The following modification,
which simply permutes entries in columns, does not change the column design,
which thus remains binary and equireplicate, so that (A1) and half of (A0) are
satisfied.

Consider the six permutations (where we regard the entries and labels as taken
from the set $A=\{1,\ldots,6\}$:
\begin{eqnarray*}
\sigma_1 &=& (1)(6,5,4,3,2)\\
\sigma_2 &=& (2)(5,6,4,1,3)\\
\sigma_3 &=& (3)(6,2,5,1,4)\\
\sigma_4 &=& (4)(2,3,6,1,5)\\
\sigma_5 &=& (5)(3,4,2,1,6)\\
\sigma_6 &=& (6)(4,5,3,1,2)
\end{eqnarray*}
(These correspond to the blocks of a neighbour-balanced design for six
treatments in six circular blocks of size five given in \cite{forest}.)
It is readily checked that,
for each ordered pair $(a_1,a_2)\in A\times A$, there is a unique permutation
in the set which maps $a_1$ to $a_2$: the set is \emph{sharply transitive}.

Now take the array $\Delta_0$. Consider column $(a,b)$, which contains the
letters $(a,b)$ (in row $*$), $b$ (in row $a$), and $(a',b')$ (in row $a'$),
where $(a',b')$ is joined to $(a,b)$ in the graph~$\Sigma$. We permute
the elements of this column, fixing the entry in row $*$, so that $(a',b')$
is placed in row $a'\sigma_a$. Since $\sigma_a$ fixes $a$, the entry in
row $a$ is not changed; moreover, the set of elements in column $(a,b)$ is
not changed. Perform this operation on every column. Let $\Delta$ be the
resulting array.

We claim first that, in the resulting array, the letter $(a,b)$ is never
contained in row $a$. For in column $(a,b)$, the entry in row $a$ is $b$,
while in column $(a',b')$, the pair with first element $a$ has been moved
to row $a\sigma_{a'}$, which is not equal to $a$.

Next we claim that, in $\Delta$, no letter is repeated in a row. For suppose
that letter $(a,b)$ occurs twice in row $a'$, say in columns $(a_1,b_1)$
and $(a_2,b_2)$. Then $a\sigma_{a_1}=a'=a\sigma_{a_2}$. By the sharp transitivity of
our set of permutations, this implies that $a_1=a_2$. Now the property of
the Sylvester graph (that two points with the same first coordinate have no 
common neighbour) implies that $b_1=b_2$. This contradicts the assumption
that the columns $(a_1,b_1)$ and $(a_2,b_2)$ are distinct.

This completes the verification of (A0).

In $\Delta$, the letters in row $a$ are those in $B$ together with all 
$(a',b')$ with $a'\ne a$ (while those in row $*$ are all ordered pairs in
$A\times B)$. Thus any two rows have $30$ common letters, so the row design
$\Delta_{R(L)}$ is balanced; that is, (A2) holds.

Finally, we check condition (A4).
\begin{itemize}\itemsep0pt
  \item Row $*$ and column $(a,b)$ have letters $(a,b)$ and its five neighbours
in common ($1+5=6$ of these).
\item Row $a$ and column $(a,b)$ have in common the
  letters $b$ and $(a',b')$ for the five
neighbours $(a',b')$ of the vertex $(a,b)$ in $\Sigma$ ($1+5=6$ of these).
\item Row $a$ and column $(a',b)$ with $a'\ne a$ have in common the letters
  $b$, $(a',b)$, and
  the four neighbours $(a'',b')$ of the vertex
  $(a',b)$ with $a''\ne a$ ($1+1+4=6$ of
these).
\end{itemize}

Thus $\Delta$ is a sesqui-array $\SA(42,6,30,\Gamma,6:7\times36)$,
where $\Gamma=\{0,1,2\}$.  It is shown (transposed) in Figure~\ref{fig:SA4}.
In the figure, we have labelled the elements of both $A$ and $B$ as
$1,\ldots,6$; the position in the array determines whether a digit belongs to
$A$ or $B$. (In an ordered pair, the first member is in $A$ and the second in
$B$; the column labels other than $*$
are elements of $A$, and the single entries in table
cells are in $B$.)

\begin{figure}[htbp]
\[
\renewcommand{\arraystretch}{0.9}
\begin{array}{c||c|c|c|c|c|c|c|}
&*&1&2&3&4&5&6\\
  \hline
  \hline
(1,1)&(1,1)&1&(3,6)&(4,5)&(5,4)&(6,3)&(2,2)\\
\hline
(1,2)&(1,2)&2&(3,4)&(4,3)&(5,5)&(6,6)&(2,1)\\
\hline
(1,3)&(1,3)&3&(3,5)&(4,2)&(5,6)&(6,1)&(2,4)\\
\hline
(1,4)&(1,4)&4&(3,2)&(4,6)&(5,1)&(6,5)&(2,3)\\
\hline
(1,5)&(1,5)&5&(3,3)&(4,1)&(5,2)&(6,4)&(2,6)\\
\hline
(1,6)&(1,6)&6&(3,1)&(4,4)&(5,3)&(6,2)&(2,5)\\
\hline
(2,1)&(2,1)&(4,4)&1&(1,2)&(6,5)&(3,3)&(5,6)\\
\hline
(2,2)&(2,2)&(4,6)&2&(1,1)&(6,4)&(3,5)&(5,3)\\
\hline
(2,3)&(2,3)&(4,5)&3&(1,4)&(6,6)&(3,1)&(5,2)\\
\hline
(2,4)&(2,4)&(4,1)&4&(1,3)&(6,2)&(3,6)&(5,5)\\
\hline
(2,5)&(2,5)&(4,3)&5&(1,6)&(6,1)&(3,2)&(5,4)\\
\hline
(2,6)&(2,6)&(4,2)&6&(1,5)&(6,3)&(3,4)&(5,1)\\
\hline
(3,1)&(3,1)&(5,5)&(6,4)&1&(1,6)&(2,3)&(4,2)\\
\hline
(3,2)&(3,2)&(5,6)&(6,3)&2&(1,4)&(2,5)&(4,1)\\
\hline
(3,3)&(3,3)&(5,4)&(6,2)&3&(1,5)&(2,1)&(4,6)\\
\hline
(3,4)&(3,4)&(5,3)&(6,1)&4&(1,2)&(2,6)&(4,5)\\
\hline
(3,5)&(3,5)&(5,1)&(6,6)&5&(1,3)&(2,2)&(4,4)\\
\hline
(3,6)&(3,6)&(5,2)&(6,5)&6&(1,1)&(2,4)&(4,3)\\
\hline
(4,1)&(4,1)&(6,6)&(5,3)&(2,4)&1&(1,5)&(3,2)\\
\hline
(4,2)&(4,2)&(6,5)&(5,4)&(2,6)&2&(1,3)&(3,1)\\
\hline
(4,3)&(4,3)&(6,4)&(5,1)&(2,5)&3&(1,2)&(3,6)\\
\hline
(4,4)&(4,4)&(6,3)&(5,2)&(2,1)&4&(1,6)&(3,5)\\
\hline
(4,5)&(4,5)&(6,2)&(5,6)&(2,3)&5&(1,1)&(3,4)\\
\hline
(4,6)&(4,6)&(6,1)&(5,5)&(2,2)&6&(1,4)&(3,3)\\
\hline
(5,1)&(5,1)&(2,6)&(4,3)&(6,2)&(3,5)&1&(1,4)\\
\hline
(5,2)&(5,2)&(2,3)&(4,4)&(6,1)&(3,6)&2&(1,5)\\
\hline
(5,3)&(5,3)&(2,2)&(4,1)&(6,5)&(3,4)&3&(1,6)\\
\hline
(5,4)&(5,4)&(2,5)&(4,2)&(6,6)&(3,3)&4&(1,1)\\
\hline
(5,5)&(5,5)&(2,4)&(4,6)&(6,3)&(3,1)&5&(1,2)\\
\hline
(5,6)&(5,6)&(2,1)&(4,5)&(6,4)&(3,2)&6&(1,3)\\
\hline
(6,1)&(6,1)&(3,4)&(1,3)&(5,2)&(2,5)&(4,6)&1\\
\hline
(6,2)&(6,2)&(3,3)&(1,6)&(5,1)&(2,4)&(4,5)&2\\
\hline
(6,3)&(6,3)&(3,2)&(1,1)&(5,5)&(2,6)&(4,4)&3\\
\hline
(6,4)&(6,4)&(3,1)&(1,5)&(5,6)&(2,2)&(4,3)&4\\
\hline
(6,5)&(6,5)&(3,6)&(1,4)&(5,3)&(2,1)&(4,2)&5\\
\hline
(6,6)&(6,6)&(3,5)&(1,2)&(5,4)&(2,3)&(4,1)&6\\
\hline
\end{array}
\]
\caption{\label{fig:SA4}A sesqui-array with $r=7$, $c=36$, $v=42$ (transposed)}
\end{figure}

\section{Sesqui-arrays 
  from biplanes}
\label{sec:biplane}

We now  construct some sesqui-arrays (and triple arrays) from
biplanes.

\subsection{Biplanes and Hussain chains}

Let $\biplane$ be a biplane (a symmetric $2$-$(V,K,2)$ design with
$V=1+{K\choose 2}$).
(Lower-case $v$ and $k$ are normally used, but these would conflict with the
notation used for our arrays.)
There are only finitely many biplanes known, with $K=3$, $4$, $5$, $6$, $9$, $11$, $13$; 
the numbers up to isomorphism are $1$, $1$, $1$, $3$, $4$, $5$, $2$ 
(there is no classification for $K=13$ as yet). 
All known biplanes are described in detail in Section 15.8
of Marshall Hall's \emph{Combinatorial Theory}, 2nd edition~\cite{hall}, the
standard reference on biplanes.

We caution the reader that there are several misprints in~\cite{hall}
in the details on the biplanes:
\begin{itemize}
\item
In the list of blocks for the biplane $B_1$ with $k=6$,
the entry $16$  in block $A_{10}$ should be $15$ (p.~322).
\item
In the description of the biplane $B_L(9)$, the second occurrence of $31$ in
the permutation $\psi$ on p.~324 should be $34$,
and the element $32$ in the second base block should be $22$ (pp.~324, 326).
\item
There are two misprints in the list of blocks of $B_H(9)$ on p.~325;
in the block beginning $2,8$, the entry $38$ should be $34$;
and in the block beginning $7,8$, the entry $10$ should be $16$. 
\item
In the table of chain-lengths on p.~333, the chain written as $5$-$5$-$3$
should be $5$-$3$-$3$.
\item
There are  two misprints in the generators of the automorphism group of $B(13)$
on p.~334: the generator $x$ should have $69$ inserted before $70$ in the last
cycle; and in $z$, the cycle $(15,25)$ should be $(15,24)$.
\end{itemize}
We do not claim to have spotted all the misprints. In addition we have
computed the Hussain chain lengths for the Aschbacher biplane and its dual
(see below); these are not given by Hall.

Let $B$ be a block of $\biplane$.
Any pair of points of $B$ lie in one further block of $\biplane$, and any other block~$B'$
of $\biplane$ not equal to~$B$ meets $B$ in two points.  So we can label the remaining
blocks by the $2$-subsets of $B$.

Following Hall, we define, for any point $q\notin B$, a graph of
valency $2$ on $B$ called a \emph{Hussain chain}. The edges of the graph
are the intersections with $B$ of the blocks containing $q$. Thus there are
$K$ edges, and any point of $B$ lies on two edges; so the graph is a union of
cycles. We call this graph $H(q)$. The lists of cycle lengths in Hussain
chains for all blocks of all the known biplanes with $K<13$ are given by Hall.

The collection of graphs $H(q)$ for $q\notin B$ has the following properties.
\begin{itemize}\itemsep0pt
\item[(H1)] 
Any two intersecting pairs of points of $B$ are both edges in $H(q)$ for a unique $q$.
\item[(H2)] 
Any two disjoint pairs of points of $B$ are both edges in $H(q)$ for exactly two values
of $q$.
\item[(H3)] Any two Hussain chains share two (disjoint) edges.
\end{itemize}
The proofs are straightforward. For every pair of points in $B$,
there is a unique block $B'$
meeting $B$ in just that pair; if two such blocks contain $q$, they can have
at most one point of $B$ in common.

The collection of Hussain chains determines the biplane:
\begin{itemize}\itemsep0pt
\item the points of $\biplane$ are the points of $B$ and the Hussain chains;
\item the blocks of $\biplane$ are a symbol $*$ and the pairs of points of $B$;
\item $*$ is incident with every point of $B$, every point of $B$ with every pair containing
it, and each pair with every Hussain chain of which it is an edge.
\end{itemize}

\subsection{The construction}

Let $\biplane$ be a biplane with $V$ points and block size $K$, and let $B$
be a block of $\biplane$.  We exclude the case $K=3$. We form an array~$\Delta$
of size $K\times(V-K)$, whose rows are indexed by the
points of $B$ and columns by the points outside $B$. The letters in the array
are indexed by the $2$-element subsets of $B$. In row $p$ and column $q$, we
put $\{p_1,p_2\}$, if $p_1$ and $p_2$ are the two neighbours of $p$ in the
Hussain chain $H(q)$; in other words, the blocks containing $p$ and $q$ meet
$B$ again in the points $p_1$ and $p_2$. 

We now check the conditions for a triple array.

\begin{itemize}\itemsep0pt
\item[(A0)] A letter cannot occur more than once in a row, by property (H1).
  In fact, the letters in row $p$ are the $2$-subsets of $B \setminus \{p\}$.
  On the other hand, $\{p_1,p_2\}$ could occur in column $q$
and rows $p$ and $p'$; indeed this happens if and only if $\{p,p_1,p',p_2\}$ is
a $4$-cycle in $H(q)$. We conclude that the array $\Delta$ is binary if and only
if no Hussain chain $H(q)$ contains a $4$-cycle.
\item[(A1)] For any letter $\{p_1,p_2\}$, for each $p\notin\{p_1,p_2\}$, there
  is a unique $q$ such that $H(q)$ has edges $\{p,p_1\}$ and $\{p,p_2\}$,
  by property~(H1). Thus
each letter occurs $K-2$ times in the array $\Delta$, which is therefore equireplicate.
\item[(A2)] Rows $p$ and $p'$ share all letters $\{p_1,p_2\}$ for which
$\{p_1,p_2\}\cap\{p,p'\}=\emptyset$. So any two rows have $K-2\choose2$ common
  letters. If $K>3$ this number is non-zero and so the
  component design $\Delta_{R(L)}$ is balanced.
\item[(A3)] It is not true in general that the component design $\Delta_{C(L)}$
is balanced. We return to this point later.
\item[(A4)] Each  row 
has $K-2$ common letters with each column. 
For if the row
and column indices are $p$ and $q$, then the common letters are the ``short
diagonals'' (pairs of vertices joined by a path of length $2$) of $H(q)$ which
do not contain $p$; there are $K-2$ of these. (The presence of $4$-cycles does
not affect this count, since the ``short diagonals'' of a $4$-cycle are
counted twice.) In other words, adjusted orthogonality always holds.
\end{itemize}

Thus, if we choose a biplane and a block such that no Hussain chain contains
a $4$-cycle, then we obtain a binary array $\Delta$ satisfying (A0)--(A2) and
(A4) but not necessarily (A3). As stated earlier, such an array is a
sesqui-array. So we have proved the first two parts of the following
result.

\begin{theorem}
Suppose that $B$ is a block of a biplane $\biplane$ with block size at least~$4$.
\begin{enumerate}
\item The array $\Delta$ constructed above satisfies (A1), (A2) and (A4).
\item It is binary if and only if none of the Hussain chains on
the block $B$ contains a $4$-cycle.
\item It is a triple array if every Hussain chain on $B$ is a union of
$3$-cycles.
\end{enumerate}
\end{theorem}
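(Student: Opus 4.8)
The plan is to prove only what remains. Parts (1) and (2) already give that $\Delta$ satisfies (A1), (A2) and (A4), and that $\Delta$ is binary (satisfies (A0)) exactly when no Hussain chain on $B$ contains a $4$-cycle. So to establish part (3) I need only two things: first, that the hypothesis guarantees (A0), and second, that it forces the column-balance condition (A3). The first is immediate: a disjoint union of $3$-cycles contains no $4$-cycle, so part (2) yields (A0) at once. Thus $\Delta$ satisfies (A0), (A1), (A2) and (A4), and the entire task reduces to verifying (A3).

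The key step is a clean description of the letters in a single column when $H(q)$ is a union of $3$-cycles. Fix a column $q$ and a $3$-cycle of $H(q)$ on vertices $\{p,a,b\}$. By the construction of $\Delta$, the entry in row $p$ of column $q$ is the pair of neighbours of $p$ in $H(q)$, which is $\{a,b\}$; likewise rows $a$ and $b$ receive $\{p,b\}$ and $\{p,a\}$. Hence the three letters contributed by this triangle are precisely its three edges. Letting $p$ range over all vertices, grouped into the triangles, I conclude that the set of letters appearing in column $q$ is exactly the edge set of the graph $H(q)$ — a set of $K$ pairs, consistent with the $K$ rows and with the binarity already in hand.

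With this identification, column balance follows immediately from property (H3). Two distinct columns $q$ and $q'$ have in common exactly those letters lying in both edge sets, that is, the edges shared by $H(q)$ and $H(q')$. By (H3) any two Hussain chains share exactly two (disjoint) edges, so columns $q$ and $q'$ have exactly two letters in common. This constant is nonzero, so (A3) holds with $\lambda_{cc}=2$, and therefore $\Delta$ is a triple array.

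I expect the only genuine content to be the edge-set identification of the middle paragraph; the step deserving care is checking that, for a union of $3$-cycles, the neighbour-pairs recorded in a column coincide exactly with the edges of $H(q)$. Once that is in place there is no real obstacle: property (H3) supplies the constant column concurrence with no further computation, and it is worth noting that the construction in fact yields the stronger conclusion that $\lambda_{cc}=2$ whenever the stated hypothesis holds.
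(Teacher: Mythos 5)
Your proof is correct and follows essentially the same route as the paper's: both deduce binarity from part (2), identify the letters in column $q$ with the edge set of $H(q)$ when the chain is a union of $3$-cycles (since the two neighbours of each vertex in a triangle form an edge), and then invoke property (H3) to obtain the constant column concurrence $\lambda_{cc}=2$. Your write-up merely spells out these steps in slightly more detail than the paper does.
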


\begin{proof}
Only the third statement remains to be proved. So suppose that all the
Hussain chains on $B$ are unions of $3$-cycles. Then the array is binary.
If $\{p,p_1\}$ and $\{p,p_2\}$ are edges of $H(q)$, then so is
$\{p_1,p_2\}$: thus the symbols in column $q$ are the edges of $H(q)$.
Property (H3) shows that any two Hussain chains $H(q)$ and $H(q')$
share precisely two edges.
\end{proof}

There are just two known biplanes which have this property. The $(16,6,2)$
biplane $B_1$ in Hall~\cite{hall} has this property for any choice of block.
The $(37,9,2)$ biplane $B_H(9)$ has a unique block for which all the Hussain
chains consist of three triangles. These give a $6\times10$ triple array with
$15$ symbols, which is shown in this form in \cite[Figure~32]{BCC},
and a $9\times28$ triple array with $36$ symbols.

The tranpose of the latter is shown in Figure~\ref{fig:TA3}.
The nine points of $B$ are identified with the projective line $\pg(1,8)$.
The elements of $\gf(8)$ are denoted $0$, $1$, $a$, $b$, $c$, $d$, $e$ and $f$,
where $b=a^2$, $f=a^3=a+1$, $c=a^4=a+b$, $e=a^5=a+b+1$ and $d=a^6=b+1$.
The rows are labelled by the Hussain chains, and the letters by pairs of
points of $B$.  The group $\pgl(2,8)$ has $28$ subgroups of order $3$, each
having three orbits: these give the $28$ Hussain chains. 
The automorphism group of this array is $\pgaml(2,8)$, which has order $1512$.

\begin{figure}
  \[
  \addtolength{\arraycolsep}{-0.5\arraycolsep}
  \begin{array}{c@{}c@{}c||cc|cc|cc|cc|cc|cc|cc|cc|cc|}
    & & & \multicolumn{2}{c|}{0} & \multicolumn{2}{c|}{1} &
    \multicolumn{2}{c|}{a} & \multicolumn{2}{c|}{b} & \multicolumn{2}{c|}{f} &
    \multicolumn{2}{c|}{c} & \multicolumn{2}{c|}{e} & \multicolumn{2}{c|}{d}
    & \multicolumn{2}{c|}{\infty}\\
    \hline
    \hline
(0,1,\infty) & (a,c,b) & (d,e,f) & 1 & \infty & 0 & \infty & b & c & a & c
    & d & e & a & b & d & f & e & f & 0 & 1\\
    \hline
    (0,a,\infty) & (b,e,f) & (1,d,c) & a & \infty & c & d & 0 & \infty & e & f
    & b & e & 1 & d & b & f & 1 & c & 0 & a\\
    \hline
    (0,b,\infty) & (f,d,c) & (a,1,e) & b &\infty & a & e & 1 & e & 0 & \infty
    & c & d & d & f & 1 & a & c & f & 0 & b\\
    \hline
    (0,f,\infty) & (c,1,e) & (b,a,d) & f & \infty & c & e & b & d & a & d &
    0 & \infty & 1 & e & 1 & c & a & b & 0 & f\\
    \hline
    (0,c,\infty) & (e,a,d) & (f,b,1) & c & \infty & b & f & d & e & 1 & f & 1
    & b & 0 & \infty & a & d & a & e & 0 & c\\
    \hline
    (0,e,\infty) & (d,b,1) & (c,f,a) & e & \infty & b & d & c & f & 1 & d & a
    & c & a & f & 0 & \infty & 1 & b & 0 & e\\
    \hline
    (0,d,\infty) & (1,f,a) & (e,c,b) & d & \infty & a & f & 1 & f & c & e & 1
    & a & b & e & b & c & 0 & \infty & 0 & d\\
    \hline
    (a,f,\infty) & (0,b,c) & (e,d,1) & b & c & d & e & f & \infty & 0 & c & a
    & \infty & 0 & b & 1 & d & 1 & e & a & f\\
    \hline
    (b,c,\infty) & (0,f,e) & (d,1,a) & e & f & a & d & 1 & d & c & \infty & 0
    & e & b & \infty & 0 & f & 1 & a & b & c\\
    \hline
    (f,e,\infty) & (0,c,d) & (1,a,b) & c & d & a & b & 1 & b & 1 & a & e &
    \infty & 0 & d & f & \infty & 0 & c & e & f\\
    \hline
    (c,d,\infty) & (0,e,1) & (a,b,f) & 1 & e & 0 & e & b & f & a & f & a & b
    & d & \infty & 0 & 1 & c & \infty & c & d\\
    \hline
    (e,1,\infty) & (0,d,a) & (b,f,c) & a & d & e & \infty & 0 & d & c & f & b & c
    & b & f & 1 & \infty & 0 & a & 1 & e\\
    \hline
    (d,a,\infty) & (0,1,b) & (f,c,e) & 1 & b & 0 & b & d & \infty & 0 & 1 & c
    & e & e & f & c & f & a & \infty & a & d\\
    \hline
    (1,b,\infty) & (0,a,f) & (c,e,d) & a & f & b & \infty & 0 & f & 1 & \infty
    & 0 & a & d & e & c & d & c & e & 1 & b\\
    \hline
    (b,d,\infty) & (c,a,0) & (1,f,e) & a & c & e & f & 0 & c & d & \infty & 1
    & e & 0 & a & 1 & f & b & \infty & b & d\\
    \hline
    (f,1,\infty) & (e,b,0) & (a,c,d) & b & e & f & \infty & c & d & 0 & e & 1
    & \infty & a & d & 0 & b & a & c & 1 & f\\
    \hline
    (c,a,\infty) & (d,f,0) & (b,e,1) & d & f & b & e & c & \infty & 1 & e & 0
    & d & a & \infty & 1 & b & 0 & f & a & c\\
    \hline
    (e,b,\infty) & (1,c,0) & (f,d,a) & 1 & c & 0 & c & d & f & e & \infty &
    a & d & 0 & 1 & b & \infty & a & f & b & e\\
    \hline
    (d,f,\infty) & (a,e,0) & (c,1,b) & a & e & b & c & 0 & e & 1 & c & d &
    \infty & 1 & b & 0 & a & f & \infty & d & f\\
    \hline
    (1,c,\infty) & (b,d,0) & (e,a,f) & b & d & c & \infty & e & f & 0 & d &
    a & e & 1 & \infty & a & f & 0 & b & 1 & c\\
    \hline
    (a,e,\infty) & (f,1,0) & (d,b,c) & 1 & f & 0 & f & e & \infty & c & d &
    0 & 1 & b & d & a & \infty & b & c & a & e\\
    \hline
    (c,e,\infty) & (b,0,a) & (f,1,d) & a & b & d & f & 0 & b & 0 & a & 1 & d
    & e & \infty & c & \infty & 1 & f & c & e\\
    \hline
    (e,d,\infty) & (f,0,b) & (c,a,1) & b & f & a & c & 1 & c & 0 & f & 0 & b
    & 1 & a & d & \infty & e & \infty & d & e \\
    \hline
    (d,1,\infty) & (c,0,f) & (e,b,a) & c & f & d & \infty & b & e & a & e &
    0 & c & 0 & f & a & b & 1 & \infty & 1 & d\\
    \hline
    (1,a,\infty) & (e,0,c) & (d,f,b) & c & e & a & \infty & 1 & \infty & d
    & f & b & d & 0 & e & 0 & c & b & f & 1 & a\\
    \hline
    (a,b,\infty) & (d,0,e) & (1,c,f) & d & e & c & f & b & \infty & a & \infty
    & 1 & c & 1 & f & 0 & d & 0 & e & a & b\\
    \hline
    (b,f,\infty) & (1,0,d) &(a,e,c) & 1 & d & 0 & d & c & e & f & \infty & b &
    \infty & a & e & a & c & 0 & 1 & b & f\\
    \hline
    (f,c,\infty) & (a,0,1) & (b,d,e) & 1 & a & 0 & a & 0 & 1 & d & e & c &
    \infty & f & \infty & b & d & b & e & c & f\\
    \hline
  \end{array}
  \]
  \caption{Triple array with $r=28$, $c=9$ and $v=36$}
  \label{fig:TA3}
  \end{figure}

Even if the conditions are not all satisfied, interesting arrays can be
obtained. Consider the unique $(7,4,2)$ biplane. For any block $B$, the three
Hussain chains on $B$ are the three possible $4$-cycles on this set. Thus,
the array~$\Delta$  is not binary, but it is easy to see that it satisfies all
the other conditions for a triple array: every pair of points is the short
diagonal of a unique $4$-cycle, but occurs twice in the corresponding column.

Another example comes from the unique $(11,5,2)$ biplane. In
this case, all Hussain chains are pentagons, and they form one orbit of the
alternating group $A_5$ on pentagons. The other orbit consists of the pentagons
formed by the diagonals of those in the first orbit, which form another
biplane isomorphic to the first. So our construction, for which the pairs in
column $q$ are the diagonals in $H(q)$, can be represented by the edges of
the image of the biplane under an odd permutation of $B$, and so (just as in
the above proof) the columns are balanced. So we do indeed obtain a
triple array, which is displayed in this way in \cite[Figure 31]{BCC}.
This example shows that the condition in the third statement of
the theorem is sufficient but not necessary.

This phenomenon is even more widespread: all biplanes with $K=6$ give triple
arrays. There is a reason for this, and a surprising further fact:

\begin{proposition}
Let $\biplane$ be any biplane with $K=6$, and $B$ any block of $\biplane$.
Then the sesqui-array obtained from it by our construction is a triple array.
Moreover, applying Proposition~\ref{p:agrawal} to this triple array gives the
same biplane in all three cases (the one denoted by $B_1$ in Hall~\cite{hall}).
\end{proposition}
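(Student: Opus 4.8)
The plan is to deal with the two assertions in turn, the first reducing to a single balance check and the second to an identification of an explicitly described biplane. Throughout, fix a biplane $\biplane$ with $K=6$, so that $V=16$ and $\Delta$ is a $6\times10$ array on the $15$ two-element subsets of $B$. First I would pin down the shape of the Hussain chains: each $H(q)$ is a $2$-regular graph on the six points of $B$ with no repeated edge, since two blocks through a common external point $q$ cannot meet $B$ in the same pair $\{p_1,p_2\}$ (they would otherwise share the three points $p_1,p_2,q$, whereas two blocks of a symmetric $2$-design with $\lambda=2$ meet in only two). Hence $H(q)$ is a disjoint union of cycles of length at least $3$, and on six vertices the only options are a single hexagon or a pair of triangles. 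In particular no Hussain chain contains a $4$-cycle, so part~(2) of the preceding theorem makes $\Delta$ binary, while part~(1) gives (A1), (A2) and (A4). Thus the whole of the first assertion comes down to verifying (A3), that $\Delta_{C(L)}$ is balanced.

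For this I would record two facts that depend only on the set $B$. The letters in column $q$ are the short diagonals of $H(q)$, and a direct inspection of the two cycle types shows these always form the edge set of a partition of $B$ into two triples: for two triangles they are the triangle edges, and for a hexagon $123456$ they are the six distance-two pairs, which are the edges of the triangles $135$ and $246$. Secondly, a fixed letter $\{p_1,p_2\}$ occurs $K-2=4$ times, once in each row $p\notin\{p_1,p_2\}$, and these four occurrences lie in four \emph{distinct} columns, since two of them in a common column $q$ would produce the forbidden $4$-cycle $p\,p_1\,p'\,p_2$ in $H(q)$. Hence each letter lies in exactly four columns, and summing over letters gives $15\cdot{4\choose2}=90$ incidences of the form (column-pair, common letter); dividing by the ${10\choose2}=45$ column-pairs shows the average number of common letters per pair of columns is $2$. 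On the other hand two triple-partitions of $B$ share either all $6$ of their within-part pairs (when equal) or exactly $2$ (when distinct), so every pair of columns shares either $2$ or $6$ letters. As the average equals the minimum value $2$, every pair shares exactly $2$: this is (A3) with $\lambda_{cc}=2$, so $\Delta$ is a triple array, and moreover the ten columns realise the ten triple-partitions of $B$ bijectively.

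For the second assertion the crucial point is already in hand. The preceding theorem shows row $p$ carries precisely the two-subsets of $B\setminus\{p\}$, and the paragraph above shows the column letter-sets are exactly the ten triple-partition edge sets; so both families of line-sets are determined by the abstract six-set $B$ and are independent of the ambient biplane. Since $v=r+c-1$ here, the labelling used in Proposition~\ref{p:agrawal} is canonical, so the symmetric design it produces depends only on these two families. Explicitly, that biplane has point set consisting of the $15$ pairs from $B$ together with a new point $\infty$; its blocks are the six sets $\{\{p,x\}:x\in B\setminus\{p\}\}\cup\{\infty\}$, one for each $p\in B$, together with the ten sets of within-part pairs of the triple-partitions. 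This description never mentions $\biplane$, so the construction returns one and the same biplane for every $K=6$ biplane and every choice of block — which is exactly the ``all three cases'' statement.

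It remains to identify this explicit biplane with Hall's $B_1$, and I expect this to be the main obstacle. The clean feature to exploit is symmetry: $\mathrm{Sym}(B)\cong S_6$ acts on the point set fixing $\infty$, transitively on the $15$ pairs, and permuting the six star-blocks and the ten partition-blocks among themselves, so $720$ divides the order of the automorphism group. I would then invoke the classification of $(16,6,2)$ biplanes in Hall~\cite{hall} to see that $B_1$ is the one distinguished by this symmetry (equivalently, by having the largest automorphism group), whence the isomorphism; alternatively one relabels the blocks above directly onto Hall's list for $B_1$. The combinatorics up to this point is essentially forced, but tying the output to Hall's \emph{specific} biplane $B_1$ is the book-keeping step, requiring either the automorphism data from the classification or an explicit matching of blocks.
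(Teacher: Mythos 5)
Your proposal is correct, but it replaces the paper's key lemma with a different argument. Both proofs agree on the setup: each Hussain chain on six points is a hexagon or a pair of triangles, so the array is binary and satisfies (A1), (A2), (A4) by the preceding theorem, and the letters in column $q$ are the short diagonals of $H(q)$ (the paper calls this the two-step graph $H^*(q)$), which always form the edge set of a pair of disjoint triangles, i.e.\ a triple-partition of $B$. Where you diverge is in showing that the ten columns give ten \emph{distinct} partitions. The paper proves directly that $q_1\neq q_2$ implies $H^*(q_1)\neq H^*(q_2)$, by a three-case analysis (two double triangles; a double triangle and a hexagon; two hexagons), using properties (H1) and (H3) of Hussain chains to rule out each coincidence; distinctness plus the count of ten then gives all ten partitions, and hence column balance with $\lambda_{cc}=2$. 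You instead use a double count: each letter lies in exactly four columns (repeats in a column would force a $4$-cycle), so the $45$ column-pairs share $15\cdot\binom{4}{2}=90$ letters on average $2$ each, while any two columns share $2$ or $6$ letters according as their partitions are distinct or equal; since the average equals the minimum, all intersections are $2$, giving distinctness, balance, and bijectivity onto the ten partitions in one stroke. Your counting argument is shorter and avoids the case analysis entirely (using (H1)/(H3) only through binarity), whereas the paper's argument is more structural and shows exactly which biplane axioms forbid coincident two-step graphs. For the second assertion you are actually more careful than the paper: the paper simply asserts that the row sets (all $2$-sets avoiding $p$) and column sets (all triple-partitions) ``exactly describe'' $B_1$, while you spell out the recovered design on the $15$ pairs plus $\infty$, observe it is independent of the ambient biplane, and pin it to Hall's $B_1$ via the induced action of $S_6$ (order $720$ divides only the automorphism group order of $B_1$ among the three $(16,6,2)$ biplanes) or by direct matching --- a legitimate completion of a step the paper leaves implicit.
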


\begin{proof}
The letters in the array are $2$-subsets of the distinguished block $B$.
The column corresponding to a point $q\notin B$ contains the edges of the
``two-step graph'' of $H(q)$, that is, two points are joined if they have a
common neighbour in $H(q)$. We denote this graph by $H^*(q)$. We note that,
because $K=6$, each Hussain chain $H(q)$ is either a pair of triangles or a
hexagon, and so $H^*(q)$ is always a pair of triangles (and is equal to
$H(q)$ if $H(q)$ is a pair of triangles).

We show that all the graphs $H^*(q)$ are distinct; it follows, since there
are ten of them, that they must consist of all possible pairs of triangles.

Suppose that $H^*(q_1)=H^*(q_2)$. There are three cases:
\begin{itemize}\itemsep0pt
\item 
$H(q_1)$ and $H(q_2)$ are both double triangles. Then
\[H(q_1)=H^*(q_1)=H^*(q_2)=H(q_2),\]
so $q_1=q_2$.
\item 
$H(q_1)$ is a double triangle (so $H(q_1)=H^*(q_1)$) and $H(q_2)$ is
a hexagon. In this case, $H(q_1)=H^*(q_2)$ consists of the short diagonals
of the hexagon, and has no edges in common  with $H(q_2)$, contradicting
property (H3).
\item $H(q_1)$ and $H(q_2)$ are both hexagons, and they have the same
two-step graph. Assume that the vertices of $H(q_1)$ are $1,2,\ldots,6$ in
order round the hexagon. By (H3), without loss of generality, $H(q_2)$ contains the
edge $\{1,2\}$. Then the other neighbour of $2$ in $H(q_2)$ must be either
$3$ or $5$; the first is impossible since then $H(q_1)$ and $H(q_2)$ would
share adjacent edges $\{1,2\}$ and $\{2,3\}$, contradicting (H1). So $\{2,5\}$ is an edge of
$H(q_2)$. By the same argument, $\{1,4\}$ is also an edge. Now the only way
to complete this to a hexagon with the correct two-step graph is to have
also the edges $\{5,6\}$, $\{6,3\}$ and $\{3,4\}$. But then $H(q_1)$ and
$H(q_2)$ share three edges, a contradiction.
\end{itemize}

Now the letters in the array are $2$-subsets of $\{1,\ldots,6\}$. By
construction, the letters in row $p$ are all the $2$-sets not containing $p$;
and the letters in column $q$ are the edges of $H^*(q)$, which form a pair
of disjoint triangles, and every such pair occurs in some column. This is
exactly a description of the biplane $B_1$.
\end{proof}

Using the list of cycle lengths in the Hussain chains of the known biplanes
given by Hall, we see that we obtain (binary) sesqui-arrays from the biplanes
with block size $5$, $6$ (all three biplanes, and indeed all of these are
triple arrays), and $9$ ($B_H(9)$ and $B'_H(9)$,
for any block -- the first of these gives a triple array for one chosen
block, as shown in Figure~\ref{fig:TA3}, but we verified by computer
that none of the other blocks give triple arrays -- and $B_L(9)$, for $10$ of
the $37$ blocks). All of the biplanes with block size $11$ have $4$-cycles for
any choice of block.


Hall does not compute the chain lengths for the Aschbacher biplanes with block
size $13$, so we have done this computation; the results are as follows (using
Hall's notation for the cycle lengths in Hussain chains).

For $B(13)$, there are 
\begin{itemize}\itemsep0pt
\item one block with chain structure $10$-$3$ ($11$ points) and $13$ ($55$
points);
\item one block with chain structure $5$-$5$-$3$ ($11$ points) and $13$
($55$ points);
\item $22$ blocks with chain structure $7$-$3$-$3$ ($5$ points),  $5$-$5$-$3$
($5$ points), $10$-$3$ ($16$ points), $9$-$4$ ($5$ points), $13$ ($35$ points);
\item $55$ blocks with chain structure $7$-$3$-$3$ ($1$ point), $6$-$4$-$3$
($6$ points), $5$-$5$-$3$ ($1$ point), $10$-$3$ ($7$ points),
$9$-$4$ ($3$ points), $8$-$5$ ($6$ points), $7$-$6$ ($9$ points), 
$13$ ($33$ points).
\end{itemize}

For its dual $B'(13)$, there are
\begin{itemize}\itemsep0pt
\item two blocks with chain structure $6$-$4$-$3$ ($55$ points) and
$10$-$3$ ($11$ points);
\item $11$ blocks with chain structure $5$-$5$-$3$ ($1$ point), $8$-$5$
($10$ points), $7$-$6$ ($10$ points) and $13$ ($45$ points);
\item $11$ blocks with chain structure $7$-$3$-$3$ ($5$ points), $10$-$3$
($1$ point), $8$-$5$ ($10$ points), $7$-$6$ ($10$ points), and $13$
($40$ points);
\item $55$ blocks with chain structure $7$-$3$-$3$ ($2$ points), $6$-$4$-$3$
($4$ points), $5$-$5$-$3$ ($3$ points), $10$-$3$ ($13$ points),
$9$-$4$ ($5$ points), $8$-$5$ ($2$ points), $7$-$6$ ($5$ points), $13$
($32$ points).
\end{itemize}

Thus we obtain (binary) sesqui-arrays from the first two blocks of $B(13)$ and
from the $22$ blocks of the second and third types in $B'(13)$.



\section{Optimality}
\label{sec:opt}

\subsection{Optimality criteria}
The statistical quality of the block design $\Delta_{C(L)}$ is determined
  by the eigenvalues of its \textit{information matrix}, which is
  $I_c - (rk)^{-1}N_{CL}N_{LC}$, where $I_c$ is the identity matrix of order~$c$.
  The constant vectors have eigenvalue~$0$.  If this eigenvalue has multiplicity
  more than one then some differences between columns cannot be estimated and
  the design is said to be \textit{disconnected}.  Otherwise, let
  the remaining eigenvalues be $\mu_1$, \ldots, $\mu_{c-1}$, with
  $0<\mu_1 \leq \mu_2 \leq \cdots \leq \mu_{c-1}\leq 1$. 
   These eigenvalues are called \emph{canonical efficiency factors}.  For a
  good design they should all be as large as possible, but they are constrained
  by the equation
 \[
  \sum_{i=1}^{c-1} \mu_i = \frac{c(k-1)}{k}.
  \]

  The design is said to be A-optimal if it maximizes the harmonic mean $\mu_A$
  of $\mu_1$, \ldots, $\mu_{c-1}$; to be D-optimal if it maximizes the
  geometric mean $\mu_D$ of $\mu_1$, \ldots, $\mu_{c-1}$; and to be E-optimal if
  it maximizes  $\mu_1$. See \cite{BCC09,ShahSinha} and \cite[Section 1.7]{SS}.
 
  The canonical efficiency factors of the dual design $\Delta_{L(C)}$ are
  $\mu_1$, \ldots, $\mu_{c-1}$ and $v-c$ others equal to $1$. Thus $\Delta_{L(C)}$
is optimal, under any of the three criteria, if and only if $\Delta_{C(L)}$ is.

If $\Delta_{C(L)}$ is balanced then $\mu_1 = \cdots = \mu_{c-1} = c(k-1)/[(c-1)k]$.
This design is A-optimal, D-optimal and E-optimal.  When such a design exists,
no non-balanced design with these parameters is optimal on any of these
criteria.  If there is no balanced design available for given values of $c$,
$r$, $v$ and $k$, then we usually try to find a design whose optimality criteria
are not too far short of $c(k-1)/[(c-1)k]$.

For the particular case that $c=n^2$, $k=n$ and $v=rn$ with $r\leq n+1$, it is
known that a square lattice design, if it exists, is A-, D- and E-optimal:
see \cite{CSCRAB}.  It has $r(n-1)$ canonical efficiency factors equal to
$(r-1)/r$ and $(n+1-r)(n-1)$ equal to $1$.  Thus
\[
\mu_1 = \frac{r-1}{r} \quad \mbox{and} \quad \mu_A = \frac{rn-n+r-1}{rn-n+2r-1}.
\]

From now on, we concentrate on the criteria $\mu_A$ and $\mu_1$.

\subsection{Optimality properties of the column component designs}

In a sesqui-array, the row component design is balanced.  The column component
may not be, and so statistical properties of the whole array depend on it.
Here we give the efficiency factors for the non-balanced column components
used in this paper.

The column component of the sesqui-array in Figure~\ref{fig:SA1} is partially balanced with
respect to the group divisible association scheme $\gd(3,2)$. Its canonical efficiency factors
are $2/3$ (three times) and $1$ (twice), so that $\mu_1 = 2/3$ and $\mu_A = 10/13 \approx
0.769$.  The alternative column design mentioned in Example~\ref{eg:1} has
canonical efficiency factors $2/3$, $3/4$ (twice) and $11/12$ (twice), giving $\mu_1=2/3$ and
$\mu_A = 330/419 \approx 0.788$.  Thus these two designs are equally good on the 
E-criterion.  The second is slightly better on the A-criterion, but cannot be incorporated into
a sesqui-array.

Bagchi showed that the column components of the sesqui-arrays constructed 
in \cite{Bagchi1996}
are E-optimal when $n\geq 5$.  The column components of the sesqui-arrays 
in \cite{BB,EccSt} are
the duals of square lattice designs, and hence optimal on all three criteria.

For the sesqui-arrays constructed in Section~\ref{sec:LS}, 
the column component has canonical
efficiency factors $1/(n+1)$ and $n/(n+1)$ (both with multiplicity $n-1$) and $1$ 
(with multiplicity $(n-1)^2$).  Thus
\[
\mu_1 = \frac{1}{n+1} \quad \mbox{and} \quad \mu_A = \frac{n(n+1)}{2n^2 + n + 1}.
\]
On the other hand, a balanced square lattice design for these parameters, if it exists,
has $\mu_1 = \mu_A = n/(n+1)$.  Thus the column components of the sesqui-arrays in
Section~\ref{sec:LS} are far from optimal, and become worse as $n$ increases.

Now we consider the block design for $36$ points in $42$ blocks of size $6$ constructed 
in Section~\ref{sec:BD36}. Using properties of the association scheme described there, we can
show that the canonical efficiency factors are $11/14$, $6/7$, $19/21$ and $1$ 
with multiplicities $16$, $5$, $9$ and $5$ respectively.  These give $\mu_1 \approx 0.786$ 
and $\mu_A \approx 0.851$.  The non-achievable upper bounds for these given by the 
non-existent affine plane are both equal to $6/7$, which is approximately $0.857$, so this 
design seems to be very good, and may indeed be optimal.

For $r=4$, the unachievable upper bound on $\mu_A$ is $0.840$; the design given 
in \cite{PW} has $\mu_A \approx 0.836$.

\subsection{Optimality properties of the rectangular arrays}
Rectangular arrays are used for designed experiments in two different contexts.  In the first,
described by Preece \cite{DAPbcs,DAPbka} and others, the design is $\Delta_{R,C(L)}$.
There are $v$ blocks of size $k$, one set of $r$ treatments and another set of $c$ treatments.
One treatment from each set is applied to each unit in each block, and a response is measured
on that unit.  The aim is to estimate the effects of each set of treatments, under the 
assumption that there is no interaction, which means that they do not affect each other.
In order to remove the effects of any differences between blocks, the data have to be projected
onto the orthogonal complement of the $v$-dimensional space defined by the blocks. If $R$
and $C$ have adjusted orthogonality with respect to $L$ then no further adjustment is needed
in order to estimate the effects of $R$ and $C$.  Thus if there is an array $\Delta$
satisfying (A0), (A1)  and (A4) for which $\Delta_{R(L)}$ and $\Delta_{C(L)}$ are both
optimal then $\Delta$ is optimal for the combined experiment.

In the other use for designed experiments, the experimental units form an $r \times c$
rectangle and there are $v$ treatments, which must be allocated to those units.  Thus
the design is $\Delta_{L(R,C)}$.  The information matrix for letters in this design is
$I_v -(rk)^{-1}N_{LC}N_{CL} - (ck)^{-1}N_{LR}N_{RL} + (vk)^{-1}J_v$, where $J_v$ is the 
$v \times v$ matrix with all entries equal to $1$.  The optimality criteria for
$\Delta_{L(R,C)}$ are based on the non-trivial eigenvalues of this matrix.

Bagchi and Shah  proved in \cite{BagShah} that if  $\Delta$ is a triple array 
then $\Delta_{L(R,C)}$ is optimal with respect to all the standard optimality 
criteria among the class of equireplicate designs for $v$ treatments in a $r \times c$ 
rectangle.

The design $\Delta_{L(R,C)}$ is said to have \textit{general balance} 
in the sense of Nelder \cite{GB} if the
matrices $N_{LR}N_{RL}$ and $N_{LC}N_{CL}$ commute with each other.
Note that adjusted orthogonality implies general balance.

Shah and Sinha  state and prove the following as Theorem 4.4.1 of \cite{ShahSinha}.
 If the array $\Delta$ has adjusted orthogonality and each of the two component designs
$\Delta_{L(R)}$ and $\Delta_{L(C)}$
is optimal, then the whole design $\Delta_{L(R,C)}$ is optimal
among equireplicate designs with general balance.  Again, this applies to all the
standard optimality criteria.

Theorem 4.4.2 of \cite{ShahSinha} is the following.
If the array $\Delta$ is equireplicate and has adjusted orthogonality and one component
is E-optimal and the other component has E-criterion bigger than the first, then
the design $\Delta_{L(R,C)}$ is E-optimal 
(without restriction to equal replication or general balance).

Pages 81--82 of \cite{ShahSinha} give an example from John and Eccleston \cite{JAJEcc}  
with $r=4$, $c=6$ and $v=12$.  Two equireplicate arrays are compared.
One has adjusted orthogonality; the other does not even have general
balance but it is better for $\Delta_{L(R,C)}$ on the A-optimality criterion.

Denote by $\mu_{AR}$, $\mu_{AC}$ and $\mu_{ARC}$ the value of the A-criterion
for the component designs $\Delta_{L(R)}$, $\Delta_{L(C)}$ and $\Delta_{L(R,C)}$
respectively.
Eccleston and McGilchrist 
proved in \cite{EccMcG} that, for equireplicate designs,
\[
\frac{1}{\mu_{ARC}} \geq \frac{1}{\mu_{AR}} + \frac{1}{\mu_{AC}} - 1
\]
with equality if and only if the array has adjusted orthogonality.
This shows immediately that, among designs with adjusted orthogonality,
the best thing to do is, if possible, make sure that the component designs 
$\Delta_{L(R)}$ and $\Delta_{L(C)}$
are both
A-optimal.  This result led several authors to conjecture that an array with 
adjusted orthogonality in which each component design is A-optimal is either 
A-optimal overall or not far from A-optimal overall.  The counter-example above 
does not really destroy that.
However, Shah and Sinha 
pointed out that Eccleston and McGil\-christ's  proof 
assumes general balance. Hence the restriction in their own theorem.  

Nonetheless, it does appear that, to find an array that is good for either $\Delta_{R,C(L)}$
or $\Delta_{L(R,C)}$, a good strategy is to find a sesqui-array $\Delta$ 
whose column component
performs well on the relevant optimality criterion.

\section{Further directions}

We conclude with some open problems indicating further directions.

\begin{itemize}
\item
Our sesqui-arrays have $k=r-1$ (Sections~\ref{sec:LS}--\ref{sec:HS}) or $k=r-2$
(Section~\ref{sec:biplane}).
Those in \cite{Bagchi1996} have $k=2$; while those in \cite{BB} have $k$ equal to $r-1$,
$(r\pm 1)/2$, $(r+3)/4$, $(r-1)/4$ and $(r\pm\sqrt{r})/2$, for suitable values of $r$;
 and those in \cite{CN} have $r=2k\pm\sqrt{k}$.
What other values of $k$ are possible?
\item
We have seen sesqui-arrays that satisfy inequality~(\ref{eq:AO}) and others that do not.
Is there a weaker inequality, other than the one in Corollary~\ref{cor:ses}, that is satisfied by
all sesqui-arrays?
\item
Do all double arrays satisfy inequality (\ref{eq:AO})?
\item
Apart from Corollary~\ref{cor:ses}, what other constraints must the column
component design $\Delta_{C(L)}$ satisfy if it is to be incorporated into a sesqui-array?
\item
There are also questions about isomorphism of sesqui-arrays. Phillips, Preece
and Wallis~\cite{ppw} enumerated the $5\times6$ triple arrays: there are seven
isomorphism classes of these. Can such classifications be extended to
sesqui-arrays? More modestly, do non-isomorphic biplanes give rise (by our
construction) to non-isomorphic triple arrays? This is particularly interesting
for the $6\times10$ arrays with $15$ letters, where, as we have seen, they all
give rise to the same biplane.
\end{itemize}


\begin{thebibliography}{99}
\bibitem{Agrawal1}
Hiralal Agrawal,
Some methods of construction of designs for two-way elimination of
heterogeneity---1, 
\textit{J.  Amer. Statist. Assoc.} \textbf{61} (1966), 1153--1171.

\bibitem{Bagchi1996}
Sunanda Bagchi,
An infinite series of adjusted orthogonal designs with replication number two,
\textit{Statistica Sinica}
\textbf{6} (1996), 975--987.

\bibitem{Bagchi1998}
Sunanda Bagchi,
On two--way designs,
\textit{Graphs and Combinatorics} \textbf{14} (1998), 313--319.

\bibitem{BB}
S. Bagchi and E.\,E.\,M. van Berkum,
On the optimality of a new class of adjusted orthogonal designs,
\textit{J. Statist. Planning Inference}
\textbf{28}
(1991),
61--65.

\bibitem{BagShah}
S. Bagchi and K.\,R. Shah,
On the optimality of a class of  row--column designs,
\textit{J. Statist. Planning Inference} 
\textbf{23} (1989), 397--402.

\bibitem{forest}
R.\,A. Bailey,
Design of experiments with edge effects and neighbour effects, 
In \textit{The Optimal Design of Forest Experiments and Forest Surveys} 
(eds.\ K. Rennolls and G. Gertner), 
University of Greenwich, London
(1993), pp. 41--48. 

\bibitem{RAB:AS}
R.\,A. Bailey,
\textit{Association Schemes: Designed Experiments, Algebra and Combinatorics},
Cambridge University Press, Cambridge, 2004.

\bibitem{BCC}
  R.\,A. Bailey,
  Relations among partitions,
  In \textit{Surveys in Combinatorics 2017}
(eds.\ Anders Claesson, Mark Dukes, Sergey Kitaev, David Manlove and Kitty Meeks),
  London Mathematical Society Lecture Note Series
\textbf{400},
Cambridge University Press, Cambridge
  (2017),
pp. 1--86.


\bibitem{BCC09}
R.\,A. Bailey and Peter J. Cameron,
Combinatorics of optimal designs,
In \textit{Surveys in Combinatorics 2009}
(eds.\ Sophie Huczynska, James D. Mitchell and Colva Roney-Dougal),
London Mathematical Society Lecture Note Series
\textbf{365},
Cambridge University Press, Cambridge
(2009), 
pp.~19--73.
  
\bibitem{extremal}
R.\,A. Bailey and P. Heidtmann,
Extremal row--column designs with maximal balance and adjusted
orthogonality,
preprint, 
Goldsmiths' College, University of London,
1994.

\bibitem{drgpage}
  R.\,F. Bailey,
  DistanceRegular.org,
  \url{http://www.distanceregular.org},
  accessed on 11 February 2018.

\bibitem{bcn}
A.\,E. Brouwer, A.\,M. Cohen and A. Neumaier,
\textit{Distance-Regular Graphs},
Ergebnisse Math. \textbf{318}, Springer, 
Berlin,
1989.

\bibitem{Brnet}
R.\,H. Bruck,
Finite nets. I. Numerical invariants.
\textit{Canad. J. Math.}
\textbf{3}
(1951), 94--107.

\bibitem{cvl}
P.\,J. Cameron and J.\,H. van Lint,
\textit{Designs, Graphs, Codes and their Links},
London Mathematical  Society Student Texts \textbf{22}, 
Cambridge University Press, Cambridge,
1991.

\bibitem{CSCRAB}
  C.-S. Cheng and R.\,A. Bailey, 
Optimality of some two-associate-class partially balanced incomplete-block designs,
 \textit{Ann. Statist.} 
\textbf{19} (1991), 1667--1671.

\bibitem{EccMcG}
J.\,A. Eccleston and C.\,A. McGilchrist,
Algebra of a row--column design,
\textit{J. Statist. Planning Inference} 
\textbf{12} (1985), 305--310.

\bibitem{EccSt}
J.\,A. Eccleston and Deborah J. Street,
Construction methods for adjusted orthogonal row--column designs,
\textit{Ars Combin.}
\textbf{29}
(1990),
117--128.


\bibitem{vanderF}
Dmitri G. Fon-Der-Flaass,
Arrays of distinct representatives --- a very simple NP-complete problem,
\textit{Discrete Math.}
\textbf{171}
(1997),
295--298.

\bibitem{hall}
Marshall Hall Jr.,
\emph{Combinatorial Theory} (2nd edition),
Wiley, New York, 1986.

\bibitem{hoffsing}
A.\,J. Hoffman and R.\,R. Singleton,
On Moore graphs of diameters $2$ and $3$,
\textit{IBM J. Res. Develop.} \textbf{4} (1960), 497--504.

\bibitem{JAJEcc}
J.\,A. John and J.\,A. Eccleston,
Row--column $\alpha$--designs,
\textit{Biometrika} 
\textbf{72} (1986), 301--306.

\bibitem{grid2}
Sheila Oates Macdonald and Anne Penfold Street,
Balanced binary \mbox{arrays} II: the triangular grid.
\textit{Ars Combin.} \textbf{8} (1979), 65--84.

\bibitem{tripweb}
John P. McSorley,
TripleArraysExs,
\url{http://lagrange.math.siu.edu/mcsorley/TripleArraysExs},
accessed on 6 June 2017.


\bibitem{triple}
  John P. McSorley, N.\,C.\,K. Phillips, W.\,D. Wallis and J.\,L. Yucas,
Double arrays, triple arrays and balanced grids,
\textit{Designs, Codes Cryptogr.}
\textbf{35}
(2005),
21--45.

\bibitem{GB}
J.\,A. Nelder,
The analysis of randomized experiments with orthogonal block structure. II.
Treatment structure and the general analysis of variance,
\textit{Proc. Royal Soc. London} Series A 
\textbf{283} 
(1965), 163--178.

\bibitem{CN}
Tomas Nilson and Peter J. Cameron, 
Triple arrays from difference sets,
\textit{J. Combinatorial Designs} \textbf{25} (2017), 494--506.

\bibitem{NO}
  Tomas Nilson and Lars-Daniel \"Ohman,
  Triple arrays and Youden squares,
  \textit{Designs, Codes Cryptogr.}
  \textbf{75}
  (2015),
  429--451.
  
\bibitem{grid3}
Sheila Oates-Williams and Anne Penfold Street,
Balanced binary arrays III: the hexagonal grid,
\textit{J. Austral. Math. Soc.} Series A
\textbf{28}
(1979),
479--498.

\bibitem{PW}
  H.\,D. Patterson and E.\,R. Williams,
  A new class of resolvable incomplete block designs,
  \textit{Biometrika}
  \textbf{63}
  (1976), 83--92.

\bibitem{ppw}
N.\,C.\,K. Phillips, D.\,A. Preece and W.\,D. Wallis,
The seven classes of $5\times6$ triple arrays,
\textit{Discrete Math.} \textbf{293} (2005), 213--218.

\bibitem{pot}
Richard F. Pothoff,
Three-factor additive designs more general than the Latin square,
\textit{Technometrics}
\textbf{4}
(1962),
187--208.

\bibitem{DAPbcs}
D.\,A. Preece,
Some row and column designs for two sets of treatments,
\textit{Biometrics}
\textbf{22}
(1966), 1--25.

\bibitem{DAPbka}
D. A. Preece,
Some balanced incomplete block designs for two sets of treatments,
\textit{Biometrika}
\textbf{53}
(1966),
479--486.

\bibitem{DAPOz}
D.\,A. Preece,
Non-orthogonal Graeco-Latin designs,
In \textit{Combinatorial Mathematics IV}
(eds.\ Louis R.\,A. Casse and Walter D. Wallis),
Lecture Notes in Mathematics \textbf{560},
Springer-Verlag, Berlin
(1976), pp. 7--26.

\bibitem{paley}
D.\,A. Preece, W.\,D. Wallis and J.\,L. Yucas,
Paley triple arrays,
\textit{Australas. J. Combin.}
\textbf{33}
(2005), 237--246.

\bibitem{dodgy}
Jennifer Seberry,
A note on orthogonal Graeco-Latin designs,
\textit{Ars Combin.}
\textbf{8}
(1979), 85--94.

\bibitem{ShahSinha}
  Kirti R. Shah and Bikas K. Sinha,
  \textit{Theory of Optimal Designs},
  Lecture Notes in Statistics
  \textbf{54},
  Springer-Verlag, New York,
  1989.

\bibitem {SW}
Leon S. Sterling and Nicholas Wormald,
A remark on the construction of designs for two-way elimination of
heterogeneity,
\textit{Bull. Austral. Math. Soc.} \textbf{14} (1976), 383--388.
  


\bibitem{grid1}
Anne Penfold Street and Sheila Oates Macdonald,
Balanced binary \mbox{arrays} I: the square grid,
In \textit{Combinatorial Mathematics VI}
(eds.\ A.\,F.~Horadam and W.\,D.~Wallis),
Lecture Notes in Mathematics \textbf{748},
Springer-Verlag, Berlin (1979),
pp.~165--198.


\bibitem{SS}
Anne Penfold Street and Deborah J. Street,
\textit{Combinatorics of Experimental Design},
Clarendon Press, Oxford, 1987.


\bibitem{DS}
Deborah J. Street,
Graeco-Latin and nested row and column designs,
In \textit{Combinatorial Mathematics VIII}
(ed.\ Kevin L. McAvaney),
Lecture Notes in Mathematics \textbf{884},
Springer-Verlag, Berlin (1981), 
pp. 304--313.

\bibitem{ebibd}
Deborah J. Street and Anne Penfold Street,
Designs with partial neighbour balance,
\textit{J. Statist. Planning Inference}
\textbf{12}
(1985),
47--59.



\bibitem{FY}
  Frank Yates,
A new method of arranging variety trials involving a large number of varieties,
\textit{J. Agricultural Science}
\textbf{226}
(1936), 424--455.

\bibitem{Yuc}
Joseph L. Yucas,
The structure of a $7 \times 15$ triple array,
\textit{Congressus Numerantium}
\textbf{154}
(2002),
43--47.

  \end{thebibliography}
\end{document}